\newtheorem{thm}{Theorem}[section]
\newtheorem{lem}[thm]{Lemma}
\newtheorem{deff}[thm]{Definition}
\newtheorem{prop}[thm]{Proposition}
\newtheorem{cor}[thm]{Corollary}
\newtheorem{rk}[thm]{Remark}
\newtheorem{theorem}{Theorem}[section]
\newtheorem{example}[theorem]{Example}
\newenvironment{defn}{\begin{deff}
		\rm }{\end{deff}}
\newcommand{\mc}{\mathcal}
\renewcommand{\ss}{\subseteq}
\newcommand{\ra}{\rightarrow}
\newcommand{\msc}{\mathscr}
\newcommand{\ol}{\overline}
\newcommand{\mfrak}{\mathfrak}
\def \l2x{L^2(X;\mc H)}
\def \sp{{\textrm{span}}}
\def \Pf{{\mbox{{\bf Pf}}}}
\def \HS{{\mc HS}}
\theoremstyle{definition}
\theoremstyle{remark}
\numberwithin{equation}{section}
\newcommand\reallywidehat[1]{%
	\savestack{\tmpbox}{\stretchto{%
			\scaleto{%
				\scalerel*[\widthof{\ensuremath{#1}}]{\kern.1pt\mathchar"0362\kern.1pt}%
				{\rule{0ex}{\textheight}}
			}{\textheight}%
		}{2.4ex}}%
	\stackon[-6.9pt]{#1}{\tmpbox}%
}
\begin{document}

\title[Characterizations of  Extra-invariant spaces ]{Characterizations of  Extra-invariant spaces under  \\ the left translations
	on  a  Lie group
}
\author{Sudipta Sarkar}
\address{Department of Mathematics,
	Indian Institute of Technology Indore,
	Simrol, Khandwa Road,
	Indore-453 552}
\email{phd1701141004@iiti.ac.in, nirajshukla@iiti.ac.in}

\thanks{Research of  S. Sarkar and N. K. Shukla was supported by   research grant from CSIR, New Delhi [09/1022(0037)/2017-EMR-I] and NBHM-DAE [02011/19/2018-NBHM(R.P.)/R\&D II/14723], respectively.}

\author{Niraj K. Shukla}


\subjclass[2020]{22E25, 22E30, 43A60,  43A80}

\keywords{Nilpotent Lie Group,   Translation Invariant Space, Range function, Plancherel transform and Periodization operator, Heisenberg group}


\begin{abstract} 
	In the context of a connected, simply connected, nilpotent Lie group, whose representations are square-integrable modulo the center, we find characterization results  of  extra-invariant spaces under  the left translations
	associated with the range functions.  Consequently, the theory is valid for the Heisenberg group $\mathbb H^d$, a 2-step nilpotent Lie group. 
\end{abstract}


\maketitle

\section{{\bf Introduction }}\label{intro}
Let $G$ be a connected, simply connected, nilpotent Lie group with Lie  algebra $\mathfrak{g}$ and   $Z$ be  the  center of $G$.  Then $G$ is an \textit{$SI/Z$ group} if almost all of its irreducible representations are square-integrable (SI) modulo the center $Z$.
 An irreducible representation $\pi$ of $G$ is called \textit{square integrable modulo the center} if  it satisfies the condition 
 \begin{align*}
 \int_{G/Z}|\langle \pi({g}) u,v\rangle|^2d \dot{g}<\infty, \ \mbox{for all}\ u, v.
 \end{align*} 
We briefly start by describing the left translation generated  systems in $L^2 (G)$ as follows: 
  \begin{defn} \label{invariant space}
  	Let  $\Lambda$ be a   uniform lattice in  the center  $Z$  of $G$ and  $\Gamma$ be a discrete set    lying outside the center $Z$. 
 	A closed subspace $W$ of $L^2(G)$ is said to be \textit {$\Gamma\Lambda$-invariant} if 
 	$$
 	L_{\gamma\lambda}f\in W, \  \mbox{for all} \ \gamma\in \Gamma, \lambda\in \Lambda  \ \mbox{and} \ f \in W, 
 	$$ 
 	where for each $y\in G$,  $L_{y} f (x)= f (y^{-1}x),$ for $x \in G$ and $f \in L^2 (G)$.
 \end{defn}
 For a sequence  of functions  $\msc A =\{\varphi_k : k\in  I\}$ in $L^2( G)$,    we   define \textit{$\Gamma\Lambda$-invariant   space  $\mc S^{\Gamma\Lambda}(\msc A)$ generated by $\msc A$} with the action of $\Gamma\Lambda$ as  follows: 
 \begin{align}\label{TIsystem}
 	\mc S^{\Gamma\Lambda}(\msc A) = \ol{\mbox{span}} \ \mc E^{\Gamma\Lambda}(\msc A), \ \mbox{where} \ \mc E^{\Gamma\Lambda} (\msc A) =\{L_{\gamma\lambda}\varphi :    \gamma \in  \Gamma,\lambda\in \Lambda,\varphi \in \msc A\}.
 \end{align}
 If  $\msc A =\{\varphi\}$, we denote $\mc S^{\Gamma\Lambda}(\msc A)$ as  $\mc S^{\Gamma\Lambda}(\varphi)$.  In general,  $\mc E^{\Gamma\Lambda} (\msc A)$  and   $\mc S^{\Gamma\Lambda}(\msc A)$  are  known as  \textit{translation generated (TG) system} and \textit{translation invariant  (TI) space}, respectively.  Currey et al. in \cite{currey2014characterization}  provided a characterization of  all $\Gamma\Lambda$-invariant spaces using range function, followed by the work of Bownik  in  \cite{bownik2000structure}.

 Next we define an invariance set in the center $Z$ of the $SI/Z$ nilpotent Lie group $G$. 
 
 \begin{defn} 
 	For a  given  $\Gamma\Lambda$-invariant subspace $W$,  an \textit{invariance  set}  $\Theta$   is   defined by 
 	\begin{align}\label{invariance set}
 	\Theta=\{\lambda\in Z: L_{\gamma\lambda}f\in W,  \  \mbox{for all} \  \gamma \in  \Gamma \ \mbox{and} \ f\in W\}.
 \end{align}
 \end{defn}
The set $\Theta$  is a closed subgroup of $Z$ containing  $\Lambda$.  For this let us consider a net $(\lambda_\alpha)$ in $\Theta$ such that  $\lim_{\alpha} \lambda_\alpha =\lambda$, say. Then   we have $\lim_{\alpha}\|L_{\gamma\lambda_\alpha }f-L_{\gamma\lambda}f\|=0,$ for $f \in W$ and $\gamma \in \Gamma$, and hence,    $\lambda \in \Theta$ since $W$ is a  closed subspace. Therefore,  $\Theta$  is a closed set.  Since   $\Theta$ is a semigroup of $Z$ and     the image of quotient map  from $Z$ to $Z/\Lambda$ on   $\Theta$ is closed in  $Z/\Lambda$ and hence compact, therefore, the group property of $\Theta$ follows  from  the fact that  a  compact semigroup of $Z/\Lambda$ is   a group.

This paper aims to characterize all $\Gamma\Lambda$-invariant spaces  $W$ to become $\Gamma\Theta$-invariant, where $\Theta$ is a closed subgroup of $Z$ and  $\Lambda\subset \Theta$. This is known as  the \textit{extra-invariance}  of  a translation invariant space $W$. The current study of extra-invariance   touches on the non-abelian setup for a nilpotent Lie group, which is considered to be a high degree of non-abelian structure. Consequently, the theory is valid for the $d$-dimensional Heisenberg group $\mathbb H^d$, a 2-step nilpotent Lie group. The history of extra-invariance dates back to the work of Aldroubi et al. in \cite{aldroubi2010invarianceR} for the one-dimensional Euclidean case, and Anastasio et al. in \cite{extraRn, extraLCA} for higher dimensions and locally compact abelian groups.

We find necessary and sufficient conditions under which a $\Gamma\Lambda$-invariant space becomes $\Gamma\Theta$-invariant  in the context of nilpotent Lie group $G$ whose representations are $SI/Z$ type.  The characterization results below are based on the Plancherel transform.      Unlike the Euclidean and LCA group setup,  the Plancherel transform of a function is operator-valued so that the technique used in the Euclidean and LCA groups is restrained. We now state the main results of the paper.

 \begin{thm}\label{extra-invariance result}  	Let $\Lambda$ be  a   uniform lattice in the center  $Z$ of $G$ and  $\Gamma$ be a discrete  set    lying outside the center $Z$ containing the identity element $e$ such that   $\mc J$and $\Sigma$ are the Borel sections of $\Lambda^\perp/\Theta^\perp$ and $\widehat Z/ \Lambda^\perp$, respectively, where $\Theta$ is a  closed subgroup of $Z$ containing $\Lambda$.  If  $W$ is  a $\Gamma\Lambda$-invariant subspace of $L^2(G)$, then it is     $\Gamma \Theta$-invariant   if and only if for each $j\in \mc J$, 
  $W$ contains  $V_j^\Theta$,  	  where 
  \begin{align*} 
		V_j^\Theta=\{f\in L^2(G): \widehat f= \chi_{\mc H_{j}^\Theta}\widehat g, \mbox{with} \  g \in W\}, \ \mbox{and} \ \mc H_j^{\Theta}:=\Sigma+j+\Theta^\perp.  
	\end{align*}
 In this case, the space  $W$ can be decomposed as the orthogonal direct sum of $V_j^\Theta$'s,    i.e., 
	$ W= \displaystyle\bigoplus_{j\in \mc J}V_j^\Theta.$
\end{thm}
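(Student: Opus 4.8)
The plan is to transport the question to the Plancherel side, where the $\Gamma\Lambda$-invariance of $W$ is encoded by a measurable range function over a Borel section of $\widehat Z/\Lambda^\perp$, and to recognize the extra $\Gamma\Theta$-invariance as the statement that the fibers of $W$ split compatibly with the partition of $\widehat Z$ indexed by $\mc J$. I would first record two elementary facts about the sections. Since $\Sigma$ sections $\widehat Z/\Lambda^\perp$ and $\mc J$ sections $\Lambda^\perp/\Theta^\perp$, the family $\{\mc H_j^\Theta=\Sigma+j+\Theta^\perp:j\in\mc J\}$ is a measurable partition of $\widehat Z$; in particular $\sum_{j\in\mc J}\chi_{\mc H_j^\Theta}=1$ almost everywhere and the $\mc H_j^\Theta$ are pairwise disjoint. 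Moreover, every bounded $\Theta^\perp$-periodic function $m$ on $\widehat Z$ decomposes as $m=\sum_{j\in\mc J}\chi_{\mc H_j^\Theta}m_j$, where $m_j$ is the bounded $\Lambda^\perp$-periodic function obtained by transporting $m|_{\Sigma+j}$ back to $\Sigma$ and extending $\Lambda^\perp$-periodically. The first fact already yields the decomposition statement: the $V_j^\Theta$ have pairwise disjoint supports in the central variable and so are mutually orthogonal, while any $f\in W$ satisfies $\widehat f=\sum_j\chi_{\mc H_j^\Theta}\widehat f$ with each summand in $\widehat{V_j^\Theta}$ (take $g=f$ in the definition of $V_j^\Theta$); hence, as soon as $V_j^\Theta\ss W$ for every $j$, which holds in both directions of the asserted equivalence, one obtains $W=\bigoplus_{j\in\mc J}V_j^\Theta$.

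The core of the argument is to reduce both invariance notions to multiplier invariance on the central variable. Because $\Gamma$ contains $e$ and $\Lambda$ contains the identity, $\Gamma\Lambda$-invariance of $W$ is equivalent to the conjunction of $L_\gamma$-invariance for every $\gamma\in\Gamma$ and $L_\lambda$-invariance for every $\lambda\in\Lambda$ (using $L_{\gamma\lambda}=L_\gamma L_\lambda$), and likewise $\Gamma\Theta$-invariance. On the Plancherel side a central translation $L_\lambda$ acts as pointwise scalar multiplication, in the $\widehat Z$-variable, by the unitary character determined by $\lambda$, and the characters coming from $\Lambda$ (respectively $\Theta$) generate, in the relevant multiplier topology, exactly the bounded $\Lambda^\perp$-periodic (respectively $\Theta^\perp$-periodic) functions; this is the mechanism underlying the range-function characterization of translation-invariant spaces developed earlier in the paper, which I would apply once with $\Lambda$ and once with $\Theta$ (using $\Theta^\perp\ss\Lambda^\perp$). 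Since $W$ is already $\Gamma\Lambda$-invariant, it follows that $W$ is $\Gamma\Theta$-invariant if and only if $\widehat W$ is stable under multiplication by every bounded $\Theta^\perp$-periodic function. By the second fact above, and because $\widehat W$ is already stable under the bounded $\Lambda^\perp$-periodic multipliers, this is equivalent to stability of $\widehat W$ under multiplication by each $\chi_{\mc H_j^\Theta}$, $j\in\mc J$; and $\chi_{\mc H_j^\Theta}\widehat W\ss\widehat W$ says exactly that for every $f\in W$ the function $g$ with $\widehat g=\chi_{\mc H_j^\Theta}\widehat f$ lies in $W$, that is, that $V_j^\Theta\ss W$. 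Together with the first paragraph, this is the assertion of the theorem.

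I expect the main obstacle to be precisely this middle step: making rigorous, in the operator-valued Plancherel picture, that center translations act as genuine scalar multipliers in the $\widehat Z$-variable and that the closed $\Lambda$-translation-invariant subspaces of $L^2(G)$ are exactly the closed submodules over the algebra of bounded $\Lambda^\perp$-periodic functions, together with the analogous statement for $\Theta$ (which need not be a lattice). This is where the periodization operator and the range-function machinery of the earlier sections must be invoked with care; once these are in hand, the remainder is bookkeeping with the partition $\{\mc H_j^\Theta\}_{j\in\mc J}$.
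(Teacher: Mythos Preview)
Your proposal is correct and rests on the same two ingredients the paper uses---the tiling $\{\mc H_j^\Theta\}_{j\in\mc J}$ of $\widehat Z$ and the range-function/periodization machinery---but you organize the argument differently. The paper does not phrase things in terms of a multiplier module. For the ``if'' direction it proves a separate lemma (Lemma~\ref{lemmaWj}): assuming $V_j^\Theta\subset W$, it shows $V_j^\Theta$ is itself $\Gamma\Theta$-invariant by exhibiting, for each $\theta\in\Theta$, a $\Lambda^\perp$-periodic function $t_\theta$ that agrees with $e^{2\pi i\langle\cdot,\theta\rangle}$ on $\mc H_j^\Theta$, and then invoking the structural description of $\mc S^{\Gamma\Lambda}(g)$ (Proposition~\ref{decomposition}). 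For the ``only if'' direction it introduces a second periodization map $\mc T^\Theta$ over a section $\mc D$ of $\widehat Z/\Theta^\perp$ and uses the associated range function $J^\Theta$ to see that $\chi_{\mc H_j^\Theta}\widehat g$ lands in $\mc S^{\Gamma\Theta}(g)\subset W$.

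Your route absorbs both steps into the single statement ``closed $\Theta$-translation-invariance $\Leftrightarrow$ stability under bounded $\Theta^\perp$-periodic multipliers,'' and then reduces via the decomposition $m=\sum_j\chi_{\mc H_j^\Theta}m_j$ with $m_j$ $\Lambda^\perp$-periodic. This is more conceptual and avoids the auxiliary lemma, at the cost of needing the multiplier characterization for a general closed subgroup $\Theta\supset\Lambda$ (equivalently, the range-function theory over $\mc D$), which you rightly flag as the step requiring care. The paper's explicit construction of $t_\theta$ is precisely your observation that $e^{2\pi i\langle\cdot,\theta\rangle}\chi_{\mc H_j^\Theta}=m_j\chi_{\mc H_j^\Theta}$ for a $\Lambda^\perp$-periodic $m_j$, so the two arguments are equivalent at the technical level; your packaging is cleaner, while the paper's is more hands-on and self-contained given what was developed earlier.
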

As a consequence, we find the below characterization result for $\mc S^{\Gamma\Lambda}(\msc A)$ to become  $\Gamma \Theta$-invariant using the associated range function.

	\begin{thm}\label{TI_extra-invariance result}	In addition to the hypothesis  of Theorem \ref{extra-invariance result}, let $\msc A$ be a sequence of functions in $L^2(G)$. Then 
		 $\mc S^{\Gamma\Lambda}(\msc A)$ is a $\Gamma \Theta$-invariant if and only if the Plancherel transform followed by periodization  $\mc T^{\Lambda}$ satisfies 
		 $$\mc T^{\Lambda}(L_{\gamma}\varphi^j)(\sigma)\in J(\sigma),  \ a.e.   \ \sigma\in \Sigma,   \ \mbox{for all} \  j \in \mc J   \ \mbox{and} \   \gamma \in \Gamma,
		 $$  where the associated range function     $J(\sigma)=\ol{\sp} \{\mc T^{\Lambda}(L_{\gamma}\varphi)(\sigma):\varphi\in \msc A, \gamma\in\Gamma\},$ and    $\widehat \varphi^j=\widehat \varphi \chi_{\mc H^{\Theta}_j}$. 
\end{thm}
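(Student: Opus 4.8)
The idea is to feed the concrete space $W:=\mc S^{\Gamma\Lambda}(\msc A)$ into Theorem~\ref{extra-invariance result} and then unwind the resulting containment conditions down to the generating set $\mc E^{\Gamma\Lambda}(\msc A)$. So the chain I want to establish is: $\mc S^{\Gamma\Lambda}(\msc A)$ is $\Gamma\Theta$-invariant $\iff$ $V_j^{\Theta}\ss W$ for every $j\in\mc J$ (this is exactly Theorem~\ref{extra-invariance result}) $\iff$ a containment condition on each generator $L_\gamma\varphi$ $\iff$ the stated condition on the periodized Plancherel transform $\mc T^{\Lambda}$, where the last equivalence uses the range function characterization of $\Gamma\Lambda$-invariant spaces.

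First I would fix, for $j\in\mc J$, the operator $P_j$ on $L^2(G)$ with $\widehat{P_j f}=\chi_{\mc H_j^{\Theta}}\widehat f$. Since $\mc H_j^{\Theta}$ is a Borel subset of $\widehat Z$ and the Plancherel field of an $L^2(G)$-function is indexed (a.e.)\ by central characters, $P_j$ is a well-defined orthogonal projection, in particular $\|P_j\|\le 1$, and by definition $V_j^{\Theta}=P_j(W)$. I would then prove that $P_j(W)\ss W$ if and only if $P_j(L_\gamma\varphi)\in W$ for all $\varphi\in\msc A$ and $\gamma\in\Gamma$: one direction is immediate because $L_\gamma\varphi\in W$, and for the other one writes an arbitrary $g\in W$ as a norm-limit of finite linear combinations of the $L_\gamma\varphi$, applies $P_j$ termwise, and passes to the limit using $\|P_j\|\le 1$ together with the closedness of $W$. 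Feeding this into Theorem~\ref{extra-invariance result}, the assertion becomes: $\mc S^{\Gamma\Lambda}(\msc A)$ is $\Gamma\Theta$-invariant $\iff$ $P_j(L_\gamma\varphi)\in\mc S^{\Gamma\Lambda}(\msc A)$ for all $j\in\mc J$, $\gamma\in\Gamma$, $\varphi\in\msc A$.

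To reach the stated form I would next establish the commutation $P_j(L_\gamma\varphi)=L_\gamma(\varphi^j)$, where $\widehat{\varphi^j}=\widehat\varphi\,\chi_{\mc H_j^{\Theta}}$: on the Plancherel side $L_\gamma$ acts fibrewise by composition with $\pi(\gamma)$, whereas $\chi_{\mc H_j^{\Theta}}$ rescales the fibre over $\pi$ by the scalar $\chi_{\mc H_j^{\Theta}}$ evaluated at the central character of $\pi$, and a scalar commutes with that composition; hence $P_j L_\gamma=L_\gamma P_j$. Finally, by the range function characterization of $\Gamma\Lambda$-invariant spaces (\cite{currey2014characterization}) a function $h\in L^2(G)$ lies in $\mc S^{\Gamma\Lambda}(\msc A)$ iff $\mc T^{\Lambda}h(\sigma)\in J(\sigma)$ for a.e.\ $\sigma\in\Sigma$, with $J$ the range function in the statement; applying this to $h=L_\gamma(\varphi^j)$ converts the condition of the previous paragraph into ``$\mc T^{\Lambda}(L_\gamma\varphi^j)(\sigma)\in J(\sigma)$ for a.e.\ $\sigma\in\Sigma$, for all $j\in\mc J$ and $\gamma\in\Gamma$'', which is the claim. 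The point that needs genuine care is the commutation step: one must make rigorous that the set-theoretic truncation $\widehat f\mapsto\chi_{\mc H_j^{\Theta}}\widehat f$ is an honest bounded operator on $L^2(G)$, diagonal with respect to the central characters, so that it commutes with the operator-valued left translations $L_\gamma$; granting that, and the already-available range function machinery, the remainder is bookkeeping on top of Theorem~\ref{extra-invariance result}.
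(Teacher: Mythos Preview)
Your argument is correct and follows essentially the same route as the paper: apply Theorem~\ref{extra-invariance result} to $W=\mc S^{\Gamma\Lambda}(\msc A)$, identify $V_j^{\Theta}=P_j(W)$ with $P_j$ the Fourier-side truncation by $\chi_{\mc H_j^{\Theta}}$, and convert $V_j^{\Theta}\subset W$ into the range-function condition via the characterization in Proposition~\ref{range  function} (the paper phrases the last step as $J_{V_j^{\Theta}}(\sigma)\subset J(\sigma)$, which is equivalent to your generator-by-generator membership). One small slip to fix: in your density step the dense subset of $W$ consists of finite combinations of $L_{\gamma\lambda}\varphi$ with $\lambda\in\Lambda$, not just $L_\gamma\varphi$; this is harmless because $P_j$ also commutes with $L_\lambda$ (both are scalar multiplications on each Plancherel fibre), so $P_j(L_{\gamma\lambda}\varphi)=L_\lambda\bigl(P_j(L_\gamma\varphi)\bigr)\in W$ by $\Gamma\Lambda$-invariance of $W$.
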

	We further characterize this extra invariance using the dimension function.
 Given any $\Gamma\Lambda$-invariant subspace $W$ of $L^2(G)$,  we define the \textit{dimension function} as $$\dim_W:\Sigma\ra \mathbb N_{0}\bigcup \{\infty\} \ \mbox{by} \  \dim_W(\sigma):=\dim(J_W(\sigma)),  \ \mbox{for a.e.} \ \sigma \in\Sigma, $$ 
where  $J_W (\sigma)$ is the range function associated to  $W$.

\begin{thm}\label{dimension} Under the standing hypothesis of Theorem \ref{extra-invariance result}, the $\Gamma\Lambda$-invariant space $W$ is   $\Gamma\Theta$-invariant if and only if  	 the dimension function satisfies the following relation
	$$\dim_W(\sigma)=\sum_{j\in\mc J} \dim_{V_j^\Theta}(\sigma), \  a.e.  \ \sigma\in \Sigma.
	$$ 
\end{thm}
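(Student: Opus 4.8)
The plan is to deduce this result from Theorem 1.3 by translating the decomposition $W = \bigoplus_{j\in\mc J} V_j^\Theta$ into a pointwise (in $\sigma$) statement about range functions, and then applying the fact that dimension is additive over orthogonal direct sums of closed subspaces of a Hilbert space. First I would recall that the measurable range function $\sigma\mapsto J_W(\sigma)$ is determined by $W$ up to a.e.\ equivalence (by the Currey--Bownik-type correspondence cited from \cite{currey2014characterization, bownik2000structure}), and likewise each $V_j^\Theta$ has an associated range function $J_{V_j^\Theta}(\sigma)$. The key structural input is that, because the operator $\mc T^\Lambda$ (Plancherel transform followed by $\Lambda$-periodization) intertwines the $\Gamma\Lambda$-action with multiplication/fibrewise operators, the cut-off operation $f\mapsto f^j$ with $\widehat{f^j} = \widehat f\,\chi_{\mc H_j^\Theta}$ acts fibrewise: it corresponds, at almost every $\sigma\in\Sigma$, to compressing the fibre space to the summand indexed by $j$. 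Hence $J_{V_j^\Theta}(\sigma)$ is exactly the image of $J_W(\sigma)$ under this fibrewise orthogonal projection, and the $\mc J$-indexed family of such projections is an orthogonal resolution of the identity on the fibre.

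The key steps, in order, are: (i) establish that the subspaces $V_j^\Theta$, $j\in\mc J$, are pairwise orthogonal and that $\bigoplus_{j\in\mc J}V_j^\Theta \ss W$ always holds, using that the sets $\mc H_j^\Theta = \Sigma + j + \Theta^\perp$ are pairwise disjoint (since $\mc J$ is a Borel section of $\Lambda^\perp/\Theta^\perp$ and $\Sigma$ a Borel section of $\widehat Z/\Lambda^\perp$) and together tile $\widehat G$ in the relevant fibre coordinate; (ii) show $J_{\bigoplus_j V_j^\Theta}(\sigma) = \bigoplus_{j\in\mc J} J_{V_j^\Theta}(\sigma)$ as an orthogonal direct sum inside the fibre Hilbert space, a.e.\ $\sigma$ — this is the fibrewise translation of the orthogonal-sum decomposition of the TI spaces, and follows from the $\sigma$-fibrewise additivity of the range function under orthogonal direct sums; (iii) conclude by taking dimensions: $\dim J_{\bigoplus_j V_j^\Theta}(\sigma) = \sum_{j\in\mc J}\dim J_{V_j^\Theta}(\sigma) = \sum_{j\in\mc J}\dim_{V_j^\Theta}(\sigma)$ a.e.; (iv) invoke Theorem 1.3: $W$ is $\Gamma\Theta$-invariant $\iff$ $W = \bigoplus_{j\in\mc J}V_j^\Theta$ $\iff$ (since one inclusion is automatic and range functions determine the space up to null sets) $J_W(\sigma) = \bigoplus_{j} J_{V_j^\Theta}(\sigma)$ a.e.\ $\iff$ $\dim_W(\sigma) = \sum_{j\in\mc J}\dim_{V_j^\Theta}(\sigma)$ a.e. The only subtlety in the last equivalence is the passage from equality of dimensions back to equality of the spaces: since $\bigoplus_j V_j^\Theta \ss W$, the range functions satisfy $J_{\bigoplus_j V_j^\Theta}(\sigma)\ss J_W(\sigma)$ a.e., and two nested closed subspaces of a (separable) Hilbert space with equal dimension — when that common dimension may be infinite — need not coincide in general, so here one must use that the containment is of $\Gamma\Lambda$-invariant spaces and argue fibrewise that a subspace of $J_W(\sigma)$ of full dimension, arising from a sub-TI-space, forces equality; alternatively one reduces to the finite-dimensional fibres after a further measurable decomposition of $\Sigma$.

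I expect the main obstacle to be precisely step (iv)'s converse direction: controlling the case $\dim_W(\sigma) = \infty$, where equality of dimension functions does not immediately give equality of the nested range functions. The clean way around this is to note that $W\ominus\bigoplus_j V_j^\Theta$ is again a $\Gamma\Lambda$-invariant space whose range function is $J_W(\sigma)\ominus\bigoplus_j J_{V_j^\Theta}(\sigma)$ a.e., and whose dimension function is $\dim_W(\sigma) - \sum_j \dim_{V_j^\Theta}(\sigma)$ wherever the latter is finite; the hypothesis forces this difference to vanish a.e.\ on the set where $\dim_W$ is finite, and on the set where $\dim_W = \infty$ one shows separately — using that each $V_j^\Theta$ is the full $\chi_{\mc H_j^\Theta}$-cutoff of $W$ and that these cutoffs exhaust $\widehat G$ — that the residual space has zero range function there as well, hence is $\{0\}$. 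Modulo this Hilbert-space bookkeeping, the proof is a direct corollary of Theorem 1.3 together with the additivity of the dimension function over orthogonal sums of measurable range functions.
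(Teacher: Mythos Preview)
Your overall strategy---reduce to Theorem~\ref{extra-invariance result}, pass to range functions via $\mc T^\Lambda$, and use additivity of dimension over orthogonal direct sums---is exactly the paper's approach. But you have the automatic inclusion reversed, and this is not a cosmetic slip: it breaks the converse direction.

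You assert in step (i) that $\bigoplus_{j\in\mc J}V_j^\Theta\subseteq W$ always holds. That is false in general; indeed, Theorem~\ref{extra-invariance result} says that $V_j^\Theta\subseteq W$ for every $j$ is \emph{equivalent} to $W$ being $\Gamma\Theta$-invariant, so if your inclusion were automatic there would be nothing to prove. The tiling argument you cite (the $\mc H_j^\Theta$ partition $\widehat Z$) yields the opposite containment: for $f\in W$ one writes $\widehat f=\sum_{j\in\mc J}\widehat f\,\chi_{\mc H_j^\Theta}$, and each summand lies in $\widehat{V_j^\Theta}$ by definition (with $g=f$), so $W\subseteq\bigoplus_{j\in\mc J}V_j^\Theta$. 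This is precisely how the paper argues. With the inclusion corrected, the converse runs as follows: $J_W(\sigma)\subseteq\bigoplus_{j}J_{V_j^\Theta}(\sigma)$ a.e., the dimension hypothesis forces equality, hence each $J_{V_j^\Theta}(\sigma)\subseteq J_W(\sigma)$, hence each $V_j^\Theta\subseteq W$, and Theorem~\ref{extra-invariance result} finishes it.

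Your caution about the case $\dim_W(\sigma)=\infty$ is well placed (and the paper itself passes over this point without comment). With the inclusion oriented correctly, your suggested remedy adapts: since the fibrewise cutoffs $Q_j$ onto $\ell^2(j+\Theta^\perp,\mc{HS}(L^2(\mathbb R^d)))$ satisfy $\sum_j Q_j=I$ and $J_{V_j^\Theta}(\sigma)=\overline{Q_j J_W(\sigma)}$, the orthogonal complement $\bigl(\bigoplus_j J_{V_j^\Theta}(\sigma)\bigr)\ominus J_W(\sigma)$ can be analyzed directly, rather than $J_W(\sigma)\ominus\bigoplus_j J_{V_j^\Theta}(\sigma)$ as you wrote.
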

 
 The following result can be established easily for the $d$-dimensional Heisenberg group $\mathbb H^d$, a 2-step nilpotent Lie group, using Theorems  \ref{extra-invariance result}, \ref{TI_extra-invariance result} and \ref{dimension}. 
 The \textit{$d$-dimensional Heisenberg group},  denoted by $\mathbb  H^d$, is an example of $SI/Z$ group. The group $\mathbb H^d$ can be   identified with $\mathbb R^d\times \mathbb R^d\times \mathbb R$ 
 under the group operation 
 $(x,y,w).(x',y',w')=(x+x',y+y',w+w'+x\cdot y)$, $x, x', y, y' \in \mathbb R^d$, $w, w' \in \mathbb R$,  where $\cdot$ stands for $\mathbb R^d$ scalar product.  For this  case, the uniform lattice is $\Lambda=\mathbb Z$,  $\Gamma$ is a discrete set of the form $A\mathbb  Z^d \times B \mathbb Z^d$,  and the extra invariance set  $\Theta$ is of the form $\frac{1}{N}\mathbb Z$,  where  $A, B \in GL(d, \mathbb R)$ with $A B^t \in \mathbb Z$,  and $N \in \mathbb N$.  

\begin{thm} 
	Let $A, B\in GL(d,\mathbb R)$ such that  $AB^t\in \mathbb Z$ and let $N \in \mathbb N$. 
	If  $W$ is  an $A\mathbb Z^d\times B\mathbb Z^d \times \mathbb Z$-invariant subspace of $L^2(\mathbb H^d)$, then it is     $A\mathbb Z^d\times B\mathbb Z^d \times \frac{1}{N}\mathbb Z$-invariant  if and only if for each $n\in \mathbb Z_N:=\{0, 1, 2, \cdots, N-1\}$, 
	$W$ contains  $V_n^{\frac{1}{N}\mathbb Z}$,  	  where 
	\begin{align} 
	V_n^{\frac{1}{N}\mathbb Z}=\{f\in L^2(G): \widehat f= \chi_{\mc H_n^{\frac{1}{N}\mathbb Z}}\widehat g, \mbox{with} \  g \in W\}, \ \mbox{and} \ \mc H_n^{\frac{1}{N}\mathbb Z} =[n, n+1)+N\mathbb Z.  
	\end{align}
	In this case, the space   
	$ W= \displaystyle\bigoplus_{n\in \mathbb Z_N}V_n^{\frac{1}{N}\mathbb Z},$ and   $
	\dim_W(\xi)=\sum_{n\in\mathbb Z_N} \dim_{V_n^{\frac{1}{N}\mathbb Z}}(\xi), \  a.e.  \ \xi \in [0, 1)$. 
	\end{thm}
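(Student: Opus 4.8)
The plan is to derive this statement by specializing Theorems \ref{extra-invariance result} and \ref{dimension} to $G=\mathbb H^d$; the whole argument reduces to matching the abstract data $(Z,\Lambda,\Gamma,\Theta,\mc J,\Sigma,\mc H_j^\Theta)$ with its concrete Heisenberg counterparts and checking that the standing hypotheses are met. In the coordinates $\mathbb H^d\cong\mathbb R^d\times\mathbb R^d\times\mathbb R$ the center is $Z=\{(0,0,w):w\in\mathbb R\}\cong\mathbb R$, and, as noted in the text, $\mathbb H^d$ is an $SI/Z$ group, so the ambient theory applies. Here $\Lambda=\mathbb Z$ is a uniform lattice in $Z\cong\mathbb R$, $\Theta=\frac1N\mathbb Z$ is a closed subgroup of $Z$ containing $\Lambda$, and $\Gamma=A\mathbb Z^d\times B\mathbb Z^d\times\{0\}$ is a discrete subset of $\mathbb H^d$ that contains $e=(0,0,0)$ and, since $A,B$ are invertible, lies outside $Z$. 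First I would verify that $(\mathbb H^d,\Lambda,\Gamma,\Theta)$ fits the hypotheses of Theorem \ref{extra-invariance result}: the role of the assumption $AB^t\in\mathbb Z$ (i.e.\ $AB^t\in M_d(\mathbb Z)$) is to make the system $\mc E^{\Gamma\Lambda}$ compatible with the group law — writing $L_{\gamma\lambda}L_{\gamma'\lambda'}=L_{\gamma\gamma'\lambda\lambda'}$, the central coordinate of $\gamma\gamma'$ for $\gamma,\gamma'\in\Gamma$ is a $\mathbb Z$-bilinear form in the lattice parameters governed by the matrix $AB^t$, so it lies in $\Lambda=\mathbb Z$ exactly under this hypothesis, which is the structural requirement behind the range-function machinery.

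Next I would record the elementary dual-group bookkeeping. Identifying $\widehat Z\cong\mathbb R$ through the pairing $\langle\alpha,w\rangle=e^{2\pi i\alpha w}$, the annihilators are $\Lambda^\perp=\mathbb Z$ and $\Theta^\perp=(\tfrac1N\mathbb Z)^\perp=N\mathbb Z$. Hence $\Lambda^\perp/\Theta^\perp\cong\mathbb Z/N\mathbb Z$, for which $\mc J=\mathbb Z_N=\{0,1,\dots,N-1\}$ is a Borel section, while $\widehat Z/\Lambda^\perp\cong\mathbb R/\mathbb Z$, for which $\Sigma=[0,1)$ is a Borel section. Consequently, for $j=n\in\mathbb Z_N$,
\[
\mc H_n^{\frac1N\mathbb Z}=\Sigma+n+\Theta^\perp=[0,1)+n+N\mathbb Z=[n,n+1)+N\mathbb Z ,
\]
which is precisely the set in the statement, and $\{\mc H_n^{\frac1N\mathbb Z}:n\in\mathbb Z_N\}$ partitions $\widehat Z\cong\mathbb R$ up to null sets. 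Since the Plancherel transform $\widehat f$ is realized as a measurable field over $\widehat Z\cong\mathbb R$, multiplication by $\chi_{\mc H_n^{\frac1N\mathbb Z}}$ is the associated restriction, so $V_n^{\frac1N\mathbb Z}$ agrees with the space $V_j^\Theta$ of Theorem \ref{extra-invariance result}.

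With these identifications the conclusion is immediate: Theorem \ref{extra-invariance result}, applied to the $A\mathbb Z^d\times B\mathbb Z^d\times\mathbb Z$-invariant space $W$ with $\Theta=\tfrac1N\mathbb Z$, gives that $W$ is $A\mathbb Z^d\times B\mathbb Z^d\times\tfrac1N\mathbb Z$-invariant if and only if $V_n^{\frac1N\mathbb Z}\subseteq W$ for every $n\in\mathbb Z_N$, with the orthogonal decomposition $W=\bigoplus_{n\in\mathbb Z_N}V_n^{\frac1N\mathbb Z}$ in that case; Theorem \ref{dimension} then yields $\dim_W(\xi)=\sum_{n\in\mathbb Z_N}\dim_{V_n^{\frac1N\mathbb Z}}(\xi)$ for a.e.\ $\xi\in\Sigma=[0,1)$. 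I expect the only point that calls for genuine care to be the first one — confirming that $(\mathbb H^d,\mathbb Z,\Gamma,\tfrac1N\mathbb Z)$ satisfies the exact hypotheses of Theorem \ref{extra-invariance result} and that $AB^t\in\mathbb Z$ is the correct compatibility condition on $\Gamma$ — after which, together with the duality identities $\Lambda^\perp=\mathbb Z$ and $\Theta^\perp=N\mathbb Z$, everything is a direct transcription of the general results. (Theorem \ref{TI_extra-invariance result} can be transcribed analogously to characterize when a space $\mc S^{\Gamma\Lambda}(\msc A)\subseteq L^2(\mathbb H^d)$ is $A\mathbb Z^d\times B\mathbb Z^d\times\tfrac1N\mathbb Z$-invariant, although that is not part of the displayed statement.)
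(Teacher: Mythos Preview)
Your proposal is correct and follows exactly the approach indicated in the paper: the statement is obtained by specializing Theorems \ref{extra-invariance result} and \ref{dimension} to $G=\mathbb H^d$ with $Z\cong\mathbb R$, $\Lambda=\mathbb Z$, $\Theta=\tfrac1N\mathbb Z$, $\Gamma=A\mathbb Z^d\times B\mathbb Z^d$, $\Sigma=[0,1)$, $\mc J=\mathbb Z_N$, and $\mc H_n^{\frac1N\mathbb Z}=[n,n+1)+N\mathbb Z$. Your identifications and the dual-group bookkeeping are precisely those given in the paper's discussion of the Heisenberg example, and your argument is in fact more detailed than the paper's, which simply remarks that the result follows easily from the general theorems.
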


As an application of the above results, the following consequence provides an estimate to measure the support of the Plancherel transform of a  generator of $\mc S^{\Gamma\Lambda}(\msc A)$.

\begin{thm}\label{measure}  In addition to the hypothesis of  Theorem \ref{extra-invariance result}, let $\msc A=\{\varphi_i\}_{i=1}^n \subset L^2(G)$ and   $\Gamma$ be a finite set having cardinality $k$, i.e., $|\Gamma|=k$.  If  $\mc S^{\Gamma\Lambda}(\msc A)$ be   $\Gamma\Theta$-invariant,  the following inequality holds:
	\begin{align}\label{estimate}
	 \mu(\{\delta\in \mc D: \widehat \varphi_i(\delta)\neq 0\})\leq \sum_{m=0}^{nk} m \, \mu(\Sigma_{m})   \leq nk, \ \mbox{for all}\ i \in \{1, 2, \cdots, n\}, 
	\end{align}
	 where $\mc D$ is the  Borel section of $\widehat Z/\Theta^\perp$,  $\Sigma_{m}=\{\sigma\in\Sigma: \dim _W(\sigma)=m \}$ and $0$ is in the sense of the zero operator.
\end{thm}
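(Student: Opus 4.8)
The plan is to translate the dimension-function identity from Theorem \ref{dimension} into a measure estimate by integrating over $\Sigma$ and then pushing the bound down to the level set structure on $\mc D$. First I would recall from Theorem \ref{extra-invariance result} that, since $\mc S^{\Gamma\Lambda}(\msc A)$ is $\Gamma\Theta$-invariant, it decomposes as $W=\bigoplus_{j\in\mc J}V_j^\Theta$, so that $\dim_W(\sigma)=\sum_{j\in\mc J}\dim_{V_j^\Theta}(\sigma)$ a.e.\ $\sigma\in\Sigma$ by Theorem \ref{dimension}. The key observation is that $\mc J$ is a Borel section of $\Lambda^\perp/\Theta^\perp$, so for a.e.\ fixed $\sigma$ the tiles $\{\sigma+j+\Theta^\perp : j\in\mc J\}$ partition the coset $\sigma+\Lambda^\perp$; since $\Sigma$ is a section of $\widehat Z/\Lambda^\perp$ and $\mc D$ is a section of $\widehat Z/\Theta^\perp$, the map $\Sigma\times\mc J\ni(\sigma,j)\mapsto \sigma+j+\Theta^\perp$ furnishes, up to null sets, a measurable bijection between $\Sigma\times\mc J$ and $\mc D$ that is measure-preserving for the appropriate Haar measures (this is the standard tiling/section bookkeeping that underlies the range-function machinery already used in the paper). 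Consequently, for $\delta\in\mc D$ corresponding to $(\sigma,j)$, the local generator $\widehat\varphi_i(\delta)$ is (a restriction of) $\widehat{\varphi_i^j}$ evaluated near $\sigma$, and the number of such $\delta$ at which it can be nonzero in a fixed fiber over $\sigma$ is controlled by $\dim_{V_j^\Theta}(\sigma)$.

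The main step is the counting estimate. Fix $i$. On the fiber over $\sigma\in\Sigma$, the $n$ generators $\varphi_1,\dots,\varphi_n$ together with the $k$ translates by $\gamma\in\Gamma$ produce at most $nk$ vectors $\mc T^{\Lambda}(L_\gamma\varphi_i^j)(\sigma)$ spanning $J_{V_j^\Theta}(\sigma)$ across $j\in\mc J$; hence $\sum_{j\in\mc J}\#\{j : J_{V_j^\Theta}(\sigma)\neq\{0\}\}\le \dim_W(\sigma)\le nk$ does not immediately follow, so instead I would argue directly: $\widehat\varphi_i(\delta)\neq 0$ for $\delta$ in the fiber over $\sigma$ forces $\mc T^{\Lambda}(\varphi_i^j)(\sigma)\neq 0$, which in particular forces $J_{V_j^\Theta}(\sigma)\neq\{0\}$, i.e.\ $\dim_{V_j^\Theta}(\sigma)\ge 1$. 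Therefore
\begin{align*}
\#\{\delta\in\mc D \text{ over } \sigma : \widehat\varphi_i(\delta)\neq 0\}\ \le\ \#\{j\in\mc J : \dim_{V_j^\Theta}(\sigma)\ge 1\}\ \le\ \sum_{j\in\mc J}\dim_{V_j^\Theta}(\sigma)\ =\ \dim_W(\sigma).
\end{align*}
Now integrate this pointwise bound over $\sigma\in\Sigma$. By the measure-preserving identification above,
\begin{align*}
\mu(\{\delta\in\mc D : \widehat\varphi_i(\delta)\neq 0\})\ =\ \int_{\Sigma}\#\{\delta\in\mc D \text{ over }\sigma : \widehat\varphi_i(\delta)\neq 0\}\, d\mu(\sigma)\ \le\ \int_{\Sigma}\dim_W(\sigma)\, d\mu(\sigma).
\end{align*}

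Finally I would decompose $\Sigma$ into the level sets $\Sigma_m=\{\sigma\in\Sigma : \dim_W(\sigma)=m\}$. Since each fiber contributes at most $nk$ spanning vectors $\mc T^{\Lambda}(L_\gamma\varphi_i^j)(\sigma)$ — namely $n$ generators times $k$ translates, summed over the $j$'s but each $J(\sigma)$ is spanned by these — one gets $\dim_W(\sigma)\le nk$ a.e., so $\Sigma=\bigsqcup_{m=0}^{nk}\Sigma_m$ up to a null set, and
\begin{align*}
\int_{\Sigma}\dim_W(\sigma)\, d\mu(\sigma)\ =\ \sum_{m=0}^{nk} m\,\mu(\Sigma_m)\ \le\ nk\sum_{m=0}^{nk}\mu(\Sigma_m)\ =\ nk\,\mu(\Sigma)\ =\ nk,
\end{align*}
using the normalization $\mu(\Sigma)=1$ coming from $\Sigma$ being a fundamental domain for $\widehat Z/\Lambda^\perp$. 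Chaining the three displays gives \eqref{estimate}.

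I expect the main obstacle to be the measure-theoretic bookkeeping in the first paragraph: making precise that the identification $\Sigma\times\mc J\leftrightarrow\mc D$ is measure-preserving (with the correct normalizations of Haar measures on $\widehat Z$, $\Lambda^\perp$, $\Theta^\perp$ and the sections), and that $\widehat\varphi_i$ restricted to a fiber over $\sigma$ really does correspond to the pieces $\widehat{\varphi_i^j}$ entering the range function $J_{V_j^\Theta}(\sigma)$. Once the sections are set up consistently — which is essentially forced by the constructions already invoked in Theorems \ref{extra-invariance result} and \ref{TI_extra-invariance result} — the counting and integration steps are routine. A minor point to handle carefully is the $i$-uniformity: the bound $\#\{\delta : \widehat\varphi_i(\delta)\neq 0\}\le\dim_W(\sigma)$ holds for each individual $i$ because a single nonzero generator already contributes a nonzero vector to the span defining $J_{V_j^\Theta}(\sigma)$, so no summation over $i$ is needed on the right-hand side.
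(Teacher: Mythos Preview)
Your proposal is correct and follows essentially the same route as the paper: identify $\mc D$ with $\Sigma\times\mc J$ via the tiling, bound the fiber count $\#\{j:\widehat\varphi_i(\sigma+j+\theta^*)\neq 0\}$ by $\#\{j:\dim_{V_j^\Theta}(\sigma)\ge 1\}\le\sum_{j}\dim_{V_j^\Theta}(\sigma)=\dim_W(\sigma)$, integrate over $\Sigma$, and split into the level sets $\Sigma_m$. The only cosmetic differences are that the paper carries an explicit $\theta^*_{\sigma,j}\in\Theta^\perp$ to land in $\mc D$ rather than phrasing it as a measure-preserving bijection, and that you invoke Theorem~\ref{dimension} for the identity $\dim_W=\sum_j\dim_{V_j^\Theta}$ whereas the paper re-derives it in line; your explicit mention of the normalization $\mu(\Sigma)=1$ is a point the paper leaves implicit in its final inequality.
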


  The paper is organized as follows: In Section \ref{S:Pre}  we discuss the irreducible representations of a nilpotent Lie group and  Plancherel transform for the $SI/Z$ group.  Employing the Plancherel transform  followed by periodization, we  establish the proof of our main results Theorem \ref{extra-invariance result}, \ref{TI_extra-invariance result}, \ref{dimension} and \ref{measure} in Section \ref{Proof} by involving the range function associated to a $\Gamma \Lambda$-invarinat space. 
  
   \section{Irreducible   representations  of  a nilpotent Lie group}\label{S:Pre} Let $G$ be a connected, simply connected, nilpotent Lie group with Lie  algebra $\mathfrak{g}$. We identify 
$G$ with $\mfrak g\cong \mathbb R^n$ due to the analytic diffeomorphism of the exponential map $\exp:\mathfrak g\rightarrow G$, where $n=\mbox{dim}~{\mfrak g}$. To choose a basis for the Lie  algebra $\mathfrak{g}$, we consider the Jordon-H\"older series $(0)\ss \mfrak g_1\ss \mfrak g_2\ss\dots \ss \mfrak g_n=\mathfrak g$ of ideals of $\mfrak g$   such that dim~$\mfrak g_j=j$ for $j=0,1,\dots, n$ satisfying  ad$(X)\mfrak g_j\ss \mfrak g_{j-1}$  for $j=1,\dots, n$, for all $X\in \mfrak g$, where for $X, Y\in \mfrak g$, ad$(X)(Y)=[X, Y]$, the Lie bracket of $X$ and $Y$.  Now   we pick $X_j\in \mathfrak g_j\backslash \mathfrak g_{j-1}$  for each $j=1,2,\dots, n$, such that the collection $\{X_1, X_2, \dots, X_n\}$ is a Jordan-H\"older basis.  Then  the map $\mathbb R^n\longrightarrow \mathfrak g\longrightarrow G$ defined by $(x_1,x_2,\dots,x_n)\mapsto \sum_{j=1}^n x_jX_j\mapsto \exp(\sum_{j=1}^n x_jX_j)$ is a diffeomorphism, and hence the Lebesgue measure on $\mathbb R^n$ can be realized as a Haar measure on $G$. 

Note that the center $\mathfrak z$ of the Lie algebra $\mathfrak{g}$ is non-trivial, and it maps to the center $Z:=\exp{\mathfrak z}$ of $G$.
 The Lie group $G$ acts on $\mathfrak g$  and $\mathfrak g^*$ by the \textit{adjoint action} $\exp(Ad(x)X):=x\exp(X)x^{-1}$ and   \textit{co-adjoint action} $(Ad^*(x)\ell)(X) =\ell(Ad(x^{-1})X)$, respectively,  for  $x\in G, X\in \mathfrak g,$ and $ \ell\in \mathfrak g^*$.     The  $\mathfrak g^*$ denotes  the  vector space of all real-valued linear functionals on $\mathfrak g$.  For  $\ell\in \mathfrak g^*$ the   \textit{stabilizer} $R_{\ell} =\{x\in G:(Ad^*x)\ell=\ell\}$ is a Lie group   with the associated Lie algebra $r_{\ell}:=\{X\in \mathfrak{g}: \ell[Y, X]=0, \mbox{for all} \ Y \in \mathfrak g\}$. 


Our aim is to discuss   Kirilov Theory \cite{corwin2004representations}   to define the Plancherel transform for $SI/Z$ group. Given any $\ell\in \mathfrak{g^*}$ there exists a subalgebra~ $\mfrak{h}_{\ell}$ (known as \textit{polarizing} or \textit{maximal subordinate  subalgebra}) of $\mathfrak{g}$ which is maximal with respect to the property
$\ell[\mathfrak{h}_{\ell},\mfrak{h}_{\ell}]=0$. Then the map  $\mathcal{X}_{\ell}:\exp(\mfrak{h}_{\ell})\rightarrow \mathbb T$ defined  by
$\mathcal X_{\ell}(\exp X)=e^{2\pi i \ell(X)}$,  $X\in \mfrak{h}_{\ell}$ is a character on $\exp(\mfrak{h}_{\ell})$, and hence the representations induced from $\mathcal X_{l}$, $\pi_{\ell} : =\mbox{ind}_{\exp{\mathfrak h_{\ell}}}^G$$\mathcal{X}_{\ell}$, have the following properties:
\begin{itemize}
	\item[(i)] $\pi_{\ell}$ is an irreducible unitary representation of $G$.
	\item[(ii)]  Suppose $\mfrak{h'}_{\ell}$ is another subalgebra which is maximal with respect to the property $\ell[{\mathfrak {h}}'_{\ell}, {\mfrak{h}}'_{\ell}]=0$, then $\mbox{ind}_{\exp{\mfrak{h}_{\ell}}}^G\mathcal X_{\ell}\cong\mbox{ind}_{\exp(\mfrak h'_{\ell})}^G\mathcal X_{\ell'}$. 
	\item[(iii)] $\pi_{\ell_1}\cong \pi_{\ell_2}$ if and only if $\ell_1$ and $\ell_2$ lie in the same co-adjoint orbit.
	\item[(iv)] Suppose $\pi$ is a irreducible unitary representation of $G$, then there exists $\ell\in \mfrak{g^*}$ such that $\pi\cong \pi_{\ell}$.
	\end{itemize}
	Therefore there exists  a   bijection 
$\lambda^*:\mfrak g^*/Ad^*(G)\rightarrow \widehat{G}$ which is also  a Borel isomorphism, where $\widehat{G}$ is the collection of all irreducible unitary representations of $G$.

   For an irreducible representation $\pi\in \widehat G$, let $O_{\pi}$ denote as coadjoint orbit corresponding to the equivalence class of $\pi$. Then the orbital characterization for the SI/Z representation is: 
$\pi$ is square integrable modulo the center if and only if  for $\ell\in O_\pi$, $r_{\ell}=\mathfrak z$ and $O_\pi=\ell+\mathfrak z^{\perp}$. If SI/Z $\neq \phi$,  then SI/Z=$\widehat{G}_{max}$, where the Borel subset  $\widehat {G}_{max}\ss \widehat G$ corresponds  to coadjoint orbits of maximal dimension which is co-null for Plancherel measure class. Hence when $G$ is an SI/Z group,  $\widehat G_{max}$ is parameterized by a subset of $\mathfrak z^*$. If  $\pi \in \widehat G_{\max}$, then  dim $O_\pi=n-\mbox{dim} \  \mathfrak z$, since $O_\pi$ is symplectic manifold, it is of even dimension, say,  $\dim O_\pi=2d$.  By Schurs' Lemma the restriction of $\pi$ to $Z$ is a  character and hence  it is a unique element $\sigma=\sigma_\pi \in \mathfrak z^*$ (say) and $\pi(z)=e^{2\pi i\langle \sigma, log z\rangle}I$, where $I $ is the identity operator. It shows that $O_\pi=\{l\in \mathfrak g^*:l|_{\mathfrak z}=\sigma\}$ and $\pi\mapsto \sigma_\pi$ is injective.

 Let $G$ be  an SI/Z group    and $\mc W=\{\sigma \in \mathfrak{z^*}:{\bf Pf}(\sigma)\neq 0\}$
 be a cross section for the coadjoint orbits of maximal dimension, where  the \textit{Pfaffian determinant}   ${\bf Pf}: \mathfrak z^*\longrightarrow \mathbb R$ is given by  $\ell \mapsto \sqrt{\big| \det(\ell[X_i, X_j] )_{i,j=r\dots n}\big|}$.  Then  for a fixed $\sigma \in \mc W$,  $p(\sigma)=\sum_{j=1}^d \mathfrak g_j(\sigma\big|_{\mathfrak g_j})$ is a  maximal subordinate subalgebra for $\sigma$ and the corresponding induced representation $\pi_\sigma$ is realized naturally in $L^2 (\mathbb R^d)$, where $n=r+2d$ for some $d$. For each $\varphi \in L^1(G)\cap L^2(G)$, the Fourier transform of $\varphi$  given  by  
$$\widehat{\varphi}(\sigma)=\int_{G} \varphi(x)\pi_{\sigma}(x)dx, \quad \sigma \in \mc W$$
defines a trace-class operator on $L^2(\mathbb R^d)$, with the inner product $\langle A, B\rangle_{ \HS}=tr(B^*A)$. This space is denoted by $\HS(L^2(\mathbb R^d))$.  When $d\sigma$ is suitable normalized,
$$ \|\varphi\|^2 =\int_{\mc W}\|\widehat{\varphi}(\sigma)\|^2_{\HS(L^2(\mathbb R^d))}|{\bf Pf}(\sigma)| d(\sigma).$$
 The Fourier transform can be extended unitarily as $\mc F$ - the \textit{Plancherel transform},
 \begin{align*}
 	\mc F:L^2(G) & \rightarrow L^2(\mathfrak z^*,\HS(L^2(\mathbb R^d)), |{\bf Pf}(\sigma)|d\sigma)) ,   \quad  \mc Ff=\widehat f.
 	 \end{align*}
  Note that the Plancherel transform  $\mc F$ satisfies the relation 
  $$\mc F (L_{\gamma} f )(\sigma)=\pi_{\sigma}(\gamma) \mc F f (\sigma), \  \mbox{for} \ \gamma \in G,  \ a.e. \  \sigma \in \mathfrak z^*,  \ \mbox{and} \  f \in L^2(G),$$
    where   the left translation operator $L_\gamma$ on $L^2 (G)$  is given by $L_{\gamma} f (x)= f (\gamma^{-1}x)$.  
 
In case of $\mathbb H^d$, $\mc W= \mathbb R\backslash \{0\}$ and  $|\Pf(\sigma)|=|\sigma|^{d}$. Then    the Plancherel transform for $\varphi \in L^1(\mathbb H^d)\cap L^2(\mathbb H^d)$ is defined by $
\mc F\varphi(\lambda)=\int_{\mathbb H^d}\varphi(x)\pi_{\lambda}(x)dx$ for $\lambda \in \mc W= \mathbb R\backslash \{0\}$,  where the Schr\"{o}dinger representations $\pi_{\lambda}$ on $L^2(\mathbb R^d)$ is given by 
$\pi_{\lambda}(x,y,w)f(v)=\pi_{\lambda}(x,y,w)f(v)=e^{2\pi i \lambda w}e^{-2\pi i \lambda y. v}f(v-x), \ (x,y,w)\in \mathbb H^d, \ f \in L^2(\mathbb R^d). $


 

\section{Proof of  the Main  Results}\label{Proof}

 \textit{Through out the paper}  let us assume that  $G$ be  an SI/Z  nilpotent Lie group with center $Z$.  From the Section \ref{S:Pre},  we consider  the center  $Z$ identified with $\mathbb R^{r}$  ($r<n$)  for  a chosen  (ordered) basis $\{X_1, X_2,\dots, X_n\}$ of the corresponding  Lie algebra $\mathfrak g$, as follows: $$Z=\exp \mathbb RX_{1} \exp \mathbb RX_{2} 
 \cdots \exp \mathbb RX_r.$$ Also we write a set   $\mathcal X$  identified with $\mathbb R^{2d}$ ($n=r+2d$) as follows:   
 $$\mathcal X = \exp \mathbb RX_{r+1} \exp \mathbb RX_{r+2}\cdots \exp \mathbb RX_n.$$   
 The elements  $y=(y_1, y_2, \dots, y_r) \in \mathbb R^r$ and $x=(x_{r+1}, \dots, x_n) \in \mathbb R^{2d}$  are   identified by   
 $$
 y=\exp y_1  X_1 \exp y_2 X_2 \cdots  \exp y_r X_r,   \ \mbox{and} \ x=\exp  x_{r+1}X_{r+1} \exp x_{r+2} X_{r+2}\cdots \exp x_{n} X_n.
 $$
 It can be noted from   the homeomorphism between $\mathbb R^r \times \mathbb R^{2d}$ and  $G$ given by 
 {\small $$
 	(y_1, y_2, \dots, y_r, x_{r+1}, \dots, x_n)\mapsto \exp y_1  X_1 \exp y_2  
 	X_2 \cdots  \exp y_r X_r \exp  x_{r+1}X_{r+1} \exp x_{r+2} X_{r+2}\cdots \exp x_{n} X_n. 
 	$$
 }
Further assume that     $\Lambda$ is  a   uniform lattice in   $Z$, means, it is a discrete closed subgroup of $Z$  such that $Z/\Lambda$ is compact.  Then we have $\widehat{Z/\Lambda}\cong \Lambda^\perp$ and $\widehat Z/\Lambda^\perp\cong \widehat \Lambda$  since the center  $Z$  becomes a  locally compact abelian group. The dual group of  Z, denote by $\widehat{Z}$, is also identical with $\mathbb R^r$. The group $\widehat{Z}$ consists of continuous homomorphisms from $Z$ to $\mathbb T$, and the \textit{annihilator} $\Lambda^\perp$  is defined by  
$$	
\Lambda^\perp =\{\lambda^* \in   \widehat{Z}:  \lambda^*(\lambda)=1, \ \forall \ \lambda\in \Lambda\}. 
$$
 The set  $\widehat{Z}$ can be tiled by $\Sigma$ with  the tiling   partner $\Lambda^\perp$, where  $\Sigma$ is  \textit{measurable section} of  $\widehat Z/\Lambda^\perp$ having finite measure.  The set $\Sigma$ is  a tiling set  of $\widehat {Z}$, means, the collection $\{\Sigma+\lambda^*: \lambda^* \in \Lambda^\perp\}$ is a measurable partition of  $\widehat Z$.   For a measure space  $(X,\mu)$,  a countable set $\{\Omega_j\}_j$ of subsets of $X$ is    \textit{tiling} of $X$ if 
 $\mu(X\backslash\bigcup_{j}\Omega_j)=0$, and $\mu(\Omega_i\cap \Omega_j)=0$, when $i\neq j.$ A set $T$    is      \textit{tiling partner}  of $\Omega$  for $X$  if  there is a set $\Omega$ in $X$ such that   the collection $\{\Omega+x: x\in T\}$ is a tiling of $X$.

We also fix the set  $\Gamma$   in $ \mathcal X$   lying  outside of  the center $Z$.  Recall  the $\Gamma\Lambda$-invariant  subspace $W$ from Definition  \ref{invariant space}, $\Gamma\Lambda$-invariant space $\mc S^{\Gamma\Lambda}(\msc A)$ from (\ref{TIsystem}) generated by $\msc A$ and invariance set $\Theta$ from (\ref{invariance set}). Firstly we concentrate for the properties of $\Theta$.    We observe the tilling  property of  $\Sigma$ with respect to $\Theta^\perp$ in the following result.

\begin{prop}
	For  any section $\mc J$ of  $\Lambda^\perp/\Theta^\perp$, the set $\Theta^\perp+\mc J$ is a tiling partner of  $\Sigma$ for $\widehat Z$.  That means,   the collection  $\{\mc H_j^{\Theta}\}_{ j\in \mc J}$ is tiling of   $\widehat Z$, where 
	\begin{align}\label{Hj}
	H_j^{\Theta}:=\Sigma+j+\Theta^\perp. 
	\end{align}
\end{prop}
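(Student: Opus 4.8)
The plan is to deduce the statement directly from the fact that $\Sigma$, being a (measurable) section of $\widehat Z/\Lambda^\perp$, tiles $\widehat Z$ with tiling partner $\Lambda^\perp$, by regrouping those $\Lambda^\perp$-translates of $\Sigma$ according to the cosets of $\Theta^\perp$. First I would record the preliminary facts: since $\Lambda\subseteq\Theta$ we have $\Theta^\perp\subseteq\Lambda^\perp$, and since $\Theta$ is a closed subgroup of $Z$ containing the uniform lattice $\Lambda$ it is cocompact, so $Z/\Theta$ is compact and hence $\Theta^\perp$ is a discrete (indeed countable, as $\widehat Z\cong\mathbb R^r$) subgroup of $\widehat Z$; in particular $\Lambda^\perp/\Theta^\perp$ is countable and each $\mc H_j^\Theta=\bigcup_{\theta^*\in\Theta^\perp}(\Sigma+j+\theta^*)$ is a countable union of Borel translates of $\Sigma$, hence Borel. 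Because $\mc J$ is a section of $\Lambda^\perp/\Theta^\perp$, the map $(j,\theta^*)\mapsto j+\theta^*$ is a bijection of $\mc J\times\Theta^\perp$ onto $\Lambda^\perp$; equivalently $\Lambda^\perp=\bigcup_{j\in\mc J}(j+\Theta^\perp)$ is a partition into $\Theta^\perp$-cosets and, in the notation of the statement, $\Theta^\perp+\mc J=\Lambda^\perp$ as sets. Consequently the family $\{\Sigma+j+\theta^*:j\in\mc J,\ \theta^*\in\Theta^\perp\}$ coincides with $\{\Sigma+\lambda^*:\lambda^*\in\Lambda^\perp\}$, which is a tiling of $\widehat Z$; this is precisely the assertion that $\Theta^\perp+\mc J$ is a tiling partner of $\Sigma$ for $\widehat Z$.

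It then remains to check that lumping together, for each fixed $j$, the tiles in the coset $j+\Theta^\perp$ still produces a tiling, i.e. that $\{\mc H_j^\Theta\}_{j\in\mc J}$ tiles $\widehat Z$. For the covering part, $\bigcup_{j\in\mc J}\mc H_j^\Theta=\bigcup_{\lambda^*\in\Lambda^\perp}(\Sigma+\lambda^*)$ is co-null in $\widehat Z$. For essential disjointness, fix $i\neq j$ in $\mc J$: any point of $\mc H_i^\Theta\cap\mc H_j^\Theta$ lies in $(\Sigma+\lambda_1^*)\cap(\Sigma+\lambda_2^*)$ for some $\lambda_1^*\in i+\Theta^\perp$ and $\lambda_2^*\in j+\Theta^\perp$, and since $i$ and $j$ lie in distinct $\Theta^\perp$-cosets we have $\lambda_1^*\neq\lambda_2^*$, so each such intersection is null; as there are only countably many pairs $(\lambda_1^*,\lambda_2^*)$ to consider, $\mc H_i^\Theta\cap\mc H_j^\Theta$ is null. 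This finishes the proof. I do not expect a genuine obstacle here; the only points needing a little care are the measurability of $\mc H_j^\Theta$ and keeping the coset bookkeeping straight — namely that $\mc J$ being a section yields simultaneously the set identity $\Theta^\perp+\mc J=\Lambda^\perp$ and the partition of $\Lambda^\perp$ into the cosets $j+\Theta^\perp$ — together with the repeated use of countability to keep unions of null sets null.
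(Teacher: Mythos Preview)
Your proof is correct and follows essentially the same approach as the paper: both arguments use that $\{\Sigma+\lambda^*:\lambda^*\in\Lambda^\perp\}$ tiles $\widehat Z$, observe that $\Theta^\perp\subseteq\Lambda^\perp$ is discrete and countable, and then regroup the $\Lambda^\perp$-translates of $\Sigma$ along the partition $\Lambda^\perp=\bigsqcup_{j\in\mc J}(j+\Theta^\perp)$ coming from the section $\mc J$. Your write-up is a bit more explicit about measurability of $\mc H_j^\Theta$ and the covering/disjointness verification, but there is no substantive difference in strategy.
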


\begin{proof}  The set $\Sigma$ is  a tiling set  of $\widehat {Z}$, means, the collection $\{\Sigma+\lambda^*: \lambda^* \in \Lambda^\perp\}$ is a measurable partition of  $\widehat Z$. Since $\widehat{Z/\Lambda}\cong \Lambda^\perp$ and $Z/\Lambda$ is compact, therefore $\Lambda^\perp$ is discrete and countable,  and hence    $\Theta^\perp$ is also discrete and countable  follows from $\Theta^\perp
	\subset \Lambda^\perp$. Hence the collection $\{\Theta^\perp +j: j \in \mc J\}$ is a  tiling of $\Lambda^\perp$ by considering a Borel section  $\mc J$ of  $\Lambda^\perp/\Theta^\perp$.  Thus the result follows  by  employing the fact  $\Lambda^\perp$ is tiling partner of $\Sigma$ for $\widehat Z$ and $\mc J$ is a  tiling  partner of  $\Theta^\perp$  for $\Lambda^\perp$. 	
\end{proof}
\begin{example}
 For the Heisenberg group $\mathbb  H^d$, the uniform lattice $\Lambda$ in the center $ \mathbb R$, is the set of all integers $\mathbb Z$. Since the only proper closed additive subgroups of $\mathbb R$ containing $\mathbb Z$ are $\frac{1}{N}\mathbb Z$, for some natural number $N$, we consider the extra-invariance set $\Theta=\frac{1}{N} \mathbb Z$. Then the annihilators of $\Lambda$ and $\Theta$ are $\Lambda^\perp=\mathbb Z$ and  $\Theta^\perp=N\mathbb Z$, respectively. Note that the set $\mathbb R$ can be tiled by a Borel section $\Sigma=[0,1)$ with the tiling partner $\Lambda^\perp=\mathbb Z$. By assuming the Borel section $\mathbb Z_N:=\{0,1,\dots, N-1\}$ of $\Lambda^\perp/\Theta^\perp=\mathbb Z/N\mathbb Z$,  the set $n \mathbb Z+\mathbb  Z_N$ is a tiling partner of $[0, 1)$ for $\mathbb R$, that means, the collection $\{\mc H_n^{\frac{1}{N} \mathbb Z}\}_{n=0}^{N-1}$  is tiling of $\mathbb R$, where 
\begin{align*}
\mc H_n^{\frac{1}{N} \mathbb Z} =[0,1)+n+N\mathbb Z= \bigcup_{k\in \mathbb Z}[n, n+1)+Nk.
\end{align*}
For the case of Heisenberg group $\mathbb  H^d$, we consider the set $\Gamma=A\mathbb Z^d\times B\mathbb Z^d$ from outside of the center of $\mathbb H^d$,  where $A, B\in GL(d,\mathbb R)$ such that  $AB^t\in \mathbb Z$.
\end{example}

In the present section our first goal is to prove Theorem \ref{extra-invariance result} which   characterizes  invariant subspaces of  $L^2(G)$ with the action of   $\Theta$ of the center $Z$ containing  the uniform lattice  $\Lambda$. The following  lemma plays a crucial role to establish the Theorem \ref{extra-invariance result}.

 \begin{lem}\label{lemmaWj}
 	Let $W$ be a $\Gamma\Lambda$-invariant subspace of $L^2(G)$ and let $\mc J$ be a Borel section of  $\Lambda^\perp/\Theta^\perp$. For  each $j\in \mc J$,  consider the space  $V_j^\Theta$ given by 
 	\begin{align}\label{Wj}
 		V_j^\Theta=\{f\in L^2(G): \widehat f= \chi_{\mc H_j^{\Theta}}\widehat g, \ \mbox{for some} \  g \in W\} ,
 	\end{align}
 where $\mc H_j^{\Theta}$ is defined in (\ref{Hj}).  If  $V_j^\Theta \subset W$, it 	is a $\Gamma\Theta$-invariant (and hence,  $\Gamma\Lambda$-invariant)  subspace of $L^2(G)$.
 \end{lem}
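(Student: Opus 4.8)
The plan is to verify, under the hypothesis $V_j^\Theta\subseteq W$, three statements: $V_j^\Theta$ is a closed subspace of $L^2(G)$; $L_\theta(V_j^\Theta)\subseteq V_j^\Theta$ for every $\theta\in\Theta$; and $L_\gamma(V_j^\Theta)\subseteq V_j^\Theta$ for every $\gamma\in\Gamma$. Since $\Theta$ lies in the centre, $\gamma\theta=\theta\gamma$ and $L_{\gamma\theta}=L_\gamma L_\theta$, so the last two give $\Gamma\Theta$-invariance, and $\Gamma\Lambda$-invariance then follows from $\Lambda\subseteq\Theta$. Throughout I use that $W$ is in particular $\Lambda$-invariant (as $e\in\Gamma$), hence by the range function characterization (Bownik \cite{bownik2000structure}, Currey et al. \cite{currey2014characterization}) carries a measurable range function $J_W$ with $f\in W\iff \mc T^{\Lambda}f(\sigma)\in J_W(\sigma)$ for a.e. $\sigma\in\Sigma$, where $\mc T^{\Lambda}f(\sigma)=\big(\widehat f(\sigma+\lambda^*)\big)_{\lambda^*\in\Lambda^\perp}$; and the intertwining $\mc F(L_yf)(\sigma)=\pi_\sigma(y)\widehat f(\sigma)$, with $\pi_\sigma(z)=e^{2\pi i\langle\sigma,\log z\rangle}I$ a scalar for $z\in Z$. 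For closedness, because $\chi_{\mc H_j^\Theta}^2=\chi_{\mc H_j^\Theta}$ and $V_j^\Theta\subseteq W$ one has $V_j^\Theta=\{f\in W:\widehat f=\chi_{\mc H_j^\Theta}\widehat f\}$, the intersection of $W$ with the closed subspace $\{f\in L^2(G):\widehat f=\chi_{\mc H_j^\Theta}\widehat f\}$ (the preimage under $\mc F$ of the range of an orthogonal projection); linearity is clear.

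The heart of the argument is the $\Theta$-invariance. Fix $f\in V_j^\Theta$, write $\widehat f=\chi_{\mc H_j^\Theta}\widehat g$ with $g\in W$, and let $\theta\in\Theta$. Since $\{\Sigma+\mu^*\}_{\mu^*\in\Lambda^\perp}$ tiles $\widehat Z$ and $\mc H_j^\Theta=\Sigma+j+\Theta^\perp=\bigsqcup_{\theta^*\in\Theta^\perp}\big(\Sigma+(j+\theta^*)\big)$ with the elements $j+\theta^*\in\Lambda^\perp$ pairwise distinct, for $\sigma\in\Sigma$ the coordinate $\widehat f(\sigma+\lambda^*)$ can be nonzero only when $\lambda^*\in j+\Theta^\perp$. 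For such $\lambda^*=j+\theta^*$,
\[
e^{2\pi i\langle\sigma+\lambda^*,\log\theta\rangle}=e^{2\pi i\langle\sigma+j,\log\theta\rangle}\cdot e^{2\pi i\langle\theta^*,\log\theta\rangle}=e^{2\pi i\langle\sigma+j,\log\theta\rangle},
\]
the last factor being $1$ because $\theta^*\in\Theta^\perp$ annihilates $\theta\in\Theta$. Hence $e^{2\pi i\langle\cdot,\log\theta\rangle}$ is constant on the support of $\mc T^{\Lambda}f(\sigma)$, so $\mc T^{\Lambda}(L_\theta f)(\sigma)=e^{2\pi i\langle\sigma+j,\log\theta\rangle}\,\mc T^{\Lambda}f(\sigma)$. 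As $f\in W$ gives $\mc T^{\Lambda}f(\sigma)\in J_W(\sigma)$ and $J_W(\sigma)$ is a subspace, $\mc T^{\Lambda}(L_\theta f)(\sigma)\in J_W(\sigma)$ a.e., so $L_\theta f\in W$; since also $\widehat{L_\theta f}=e^{2\pi i\langle\cdot,\log\theta\rangle}\chi_{\mc H_j^\Theta}\widehat g=\chi_{\mc H_j^\Theta}\widehat{L_\theta f}$, the element $g=L_\theta f\in W$ witnesses $L_\theta f\in V_j^\Theta$. (Equivalently one may bypass the range function: extend $e^{2\pi i\langle\cdot,\log\theta\rangle}|_{\mc H_j^\Theta}$ to a $\Lambda^\perp$-periodic $\psi\in L^\infty(\widehat Z)$ by $\psi(s)=e^{2\pi i\langle s+j,\log\theta\rangle}$ for $s\in\Sigma$, and use that multiplication of Plancherel transforms by such $\psi$ preserves $\mc F W$, via weak-$*$ approximation of $\psi$ by trigonometric polynomials in $\{e^{2\pi i\langle\cdot,\log\lambda\rangle}:\lambda\in\Lambda\}$.)

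For the $\Gamma$-invariance, given $f\in V_j^\Theta\subseteq W$ and $\gamma\in\Gamma$ we have $L_\gamma f\in W$ since $W$ is $\Gamma\Lambda$-invariant and $e\in\Lambda$; moreover $\widehat{L_\gamma f}(\sigma)=\pi_\sigma(\gamma)\chi_{\mc H_j^\Theta}(\sigma)\widehat g(\sigma)=\chi_{\mc H_j^\Theta}(\sigma)\pi_\sigma(\gamma)\widehat g(\sigma)=\chi_{\mc H_j^\Theta}(\sigma)\widehat{L_\gamma f}(\sigma)$, the scalar $\chi_{\mc H_j^\Theta}(\sigma)$ commuting with the operator $\pi_\sigma(\gamma)$, so $L_\gamma f\in V_j^\Theta$. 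Combining, $L_{\gamma\theta}f=L_\gamma(L_\theta f)\in V_j^\Theta$ for all $\gamma\in\Gamma$ and $\theta\in\Theta$, which is the assertion.

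I expect the one genuinely delicate point to be the computation in the $\Theta$-step: recognizing that the support constraint built into the definition of $V_j^\Theta$ forces the translation character $e^{2\pi i\langle\cdot,\log\theta\rangle}$ to act on each fibre $\mc T^{\Lambda}f(\sigma)$ as a single scalar — this is precisely the mechanism behind extra-invariance — and carrying out the bookkeeping so that ``$\mc T^{\Lambda}f(\sigma)$ is supported in $j+\Theta^\perp$'' is correctly matched with ``$\theta^*\in\Theta^\perp$ annihilates $\Theta$''. The closedness and $\Gamma$-invariance are then routine.
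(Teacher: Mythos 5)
Your proof is correct, and its core mechanism is the same one the paper exploits: since $e^{2\pi i\langle\cdot,\theta\rangle}$ is $\Theta^\perp$-periodic and $\mc H_j^{\Theta}=\Sigma+j+\Theta^\perp$ meets only the $\Lambda^\perp$-translates $\Sigma+j+\theta^*$ with $\theta^*\in\Theta^\perp$, the central character behaves like a $\Lambda^\perp$-periodic multiplier on the support of $\widehat f$. The packaging differs, though. The paper keeps everything on the Fourier side and invokes the converse part of Proposition \ref{decomposition}: it extends $e^{2\pi i\langle\cdot,\theta\rangle}\big|_{\mc H_j^{\Theta}}$ to a genuinely $\Lambda^\perp$-periodic function $t_\theta$ on $\widehat Z$ and concludes $t_\theta\,\mc F(L_\gamma g)\in\mc F(\mc S^{\Gamma\Lambda}(g))\subset\mc F(W)$ — essentially the route you sketch in your parenthetical remark. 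Your main argument instead works fiberwise: you observe that $\mc T^{\Lambda}f(\sigma)$ is supported on the single coset $j+\Theta^\perp\subset\Lambda^\perp$, so $L_\theta$ multiplies the whole fiber by the one scalar $e^{2\pi i\langle\sigma+j,\theta\rangle}$, which keeps it inside the subspace $J_W(\sigma)$; this bypasses Proposition \ref{decomposition} entirely and only needs the range-function characterization of Proposition \ref{range  function}, making the $\Theta$-step slightly more elementary and self-contained. Your treatment of closedness (writing $V_j^\Theta=W\cap\{f:\widehat f=\chi_{\mc H_j^{\Theta}}\widehat f\}$ as an intersection of closed subspaces, using the idempotence of $\chi_{\mc H_j^{\Theta}}$) is also cleaner than the paper's sequence argument, and your explicit splitting of $L_{\gamma\theta}=L_\gamma L_\theta$ via centrality of $\Theta$ makes the combination step transparent, whereas the paper handles $\gamma$ and $\theta$ simultaneously. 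Both proofs are sound; the trade-off is only that the paper's version reuses machinery (Proposition \ref{decomposition}) that it needs elsewhere anyway, while yours isolates the minimal fiberwise fact driving the extra-invariance.
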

For the proof of Lemma \ref{lemmaWj}, we discuss the Plancherel transform followed by a periodization  $\mc T^{\Lambda}$  and the range function $J$ associated to a  $\Gamma\Lambda$-translation generated space $\mc S^{\Gamma\Lambda}(\msc A)$. First we state the Plancherel transform followed by a  periodization $\mc T^\Lambda$, which is an operator-valued linear isometry.   It is well known but for the sake of completion, we provide its proof  with the approach of  the composition of  unitary maps.  The map $\mc T^\Lambda$   intertwines left translation with a representation $\tilde{\pi}$.  For  $g \in G$ and  $h\in  L^2( \Sigma,\ell^2( \Lambda^\perp,\mc HS(L^2(\mathbb R^d)))$, the representation $\tilde{\pi}$ is given by 
$$
\tilde{\pi}(g) h(\sigma)=\tilde{\pi}_{\sigma}(g)h(\sigma), \  a.e. \  \sigma \in \Sigma.
$$
The associated representation  $\tilde \pi_{\sigma}(g)$ on $\ell^2( \Lambda^\perp,\mc HS(L^2(\mathbb R^d))$ is given by 
$$\tilde{\pi}_{\sigma}(g) z(\lambda^*)=\pi_{\sigma+{\lambda^*}}(g) \degree z({\lambda^*}), \quad  {\lambda^*} \in \Lambda^\perp, 
$$ 
where the sequence $(z({\lambda}^\perp)) $ lies in $\ell^2( \Lambda^\perp,\mc HS(L^2(\mathbb R^d))$, $\degree$  denotes the composition of operators in $\HS(L^2(\mathbb R^d))$ and $\pi_{\sigma+{\lambda^*}}(g)$ is the Hilbert-Schmidt operator defined on $L^2(\mathbb R^d)$.  
 
\begin{prop}\label{fiberization} (i) There is a unitary map $\mc T^{\Lambda}:L^2(G)\rightarrow L^2( \Sigma, \ell^2(  \Lambda^\perp,\mc HS(L^2(\mathbb R^d)))$ given  by 
	\begin{equation*}
		\mc T^{\Lambda}f(\sigma)(\lambda^*)=\mc Ff(\sigma+\lambda^*)|\Pf(\sigma+\lambda^*)|^{1/2}, \quad  f\in L^2(G),  \lambda^* \in \Lambda^\perp \  \mbox{and a.e.} \ \sigma \in \Sigma. 
	\end{equation*}

	(ii) The map $\mc T^{\Lambda}$ satisfies the intertwining property of left translation    with the representation $\tilde \pi_{}$:
	\begin{equation}\label{eq:intertwining}
	\mc T^{\Lambda}(L_{\gamma\lambda}f)(\sigma)=e^{2\pi i\langle \sigma, \lambda\rangle}\tilde{\pi}_{\sigma}(\gamma)\mc T^{\Lambda}f(\sigma), \quad  f \in L^2(G), \  \gamma  \in \Gamma, \lambda \in  \Lambda \ \mbox{and} \ \mbox{a.e.}\  \sigma\in \Sigma. 
	\end{equation} 
\end{prop}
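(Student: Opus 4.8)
The plan is to build $\mc T^{\Lambda}$ as a composition of three explicit unitaries and then push the intertwining relation of the Plancherel transform $\mc F$ through them. For part (i), recall from Section~\ref{S:Pre} that $\mc F\colon L^2(G)\to L^2(\mathfrak z^*,\HS(L^2(\mathbb R^d)),|\Pf(\sigma)|\,d\sigma)$ is unitary. Since $\Pf$ is (the square root of the modulus of) a polynomial, its zero set is Lebesgue-null, so $\mathfrak z^*\setminus\mc W$ is null and multiplication by $|\Pf(\sigma)|^{1/2}$ is a unitary $M\colon L^2(\mathfrak z^*,\HS,|\Pf|\,d\sigma)\to L^2(\mathfrak z^*,\HS,d\sigma)$. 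Finally, because $\{\Sigma+\lambda^*:\lambda^*\in\Lambda^\perp\}$ is, up to a null set, a measurable partition of $\widehat Z=\mathfrak z^*$, the unfolding map $U\colon L^2(\mathfrak z^*,\HS,d\sigma)\to L^2(\Sigma,\ell^2(\Lambda^\perp,\HS))$, $(UF)(\sigma)(\lambda^*)=F(\sigma+\lambda^*)$, is a well-defined surjective isometry; indeed
\[
\int_{\mathfrak z^*}\|F(\sigma)\|_{\HS}^2\,d\sigma=\sum_{\lambda^*\in\Lambda^\perp}\int_\Sigma\|F(\sigma+\lambda^*)\|_{\HS}^2\,d\sigma=\int_\Sigma\|(UF)(\sigma)\|_{\ell^2(\Lambda^\perp,\HS)}^2\,d\sigma .
\]
Setting $\mc T^{\Lambda}:=U\circ M\circ\mc F$ and unwinding the three definitions produces exactly the stated formula $\mc T^{\Lambda}f(\sigma)(\lambda^*)=\mc Ff(\sigma+\lambda^*)|\Pf(\sigma+\lambda^*)|^{1/2}$.

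For part (ii), I would substitute $\gamma\lambda$ for $\gamma$ in the relation $\mc F(L_{\gamma}f)(\sigma)=\pi_\sigma(\gamma)\mc Ff(\sigma)$ recalled in Section~\ref{S:Pre}, obtaining $\mc T^{\Lambda}(L_{\gamma\lambda}f)(\sigma)(\lambda^*)=\pi_{\sigma+\lambda^*}(\gamma\lambda)\,\mc Ff(\sigma+\lambda^*)\,|\Pf(\sigma+\lambda^*)|^{1/2}$. Factoring $\pi_{\sigma+\lambda^*}(\gamma\lambda)=\pi_{\sigma+\lambda^*}(\gamma)\pi_{\sigma+\lambda^*}(\lambda)$ and using that $\lambda\in Z$ forces $\pi_{\sigma+\lambda^*}(\lambda)=e^{2\pi i\langle\sigma+\lambda^*,\lambda\rangle}I$ by Schur's Lemma, the key point is that $e^{2\pi i\langle\lambda^*,\lambda\rangle}=\lambda^*(\lambda)=1$ whenever $\lambda^*\in\Lambda^\perp$ and $\lambda\in\Lambda$, so the scalar collapses to $e^{2\pi i\langle\sigma,\lambda\rangle}$, independently of $\lambda^*$. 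Recognising that $\pi_{\sigma+\lambda^*}(\gamma)\,\mc T^{\Lambda}f(\sigma)(\lambda^*)=\tilde\pi_\sigma(\gamma)\big(\mc T^{\Lambda}f(\sigma)\big)(\lambda^*)$ by the very definition of $\tilde\pi_\sigma$, this gives (\ref{eq:intertwining}).

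The measure-theoretic bookkeeping for $U$ (measurability of the section $\Sigma$, surjectivity, discarding the null set $\mathfrak z^*\setminus\mc W$) and the elementary identity relating the weighted and unweighted $L^2$-spaces are routine. The only genuinely delicate step is the $\lambda^*$-independence of the scalar coming from $\pi_{\sigma+\lambda^*}(\lambda)$, which is precisely where the hypotheses $\lambda\in\Lambda$ and $\lambda^*\in\Lambda^\perp$ enter; one must also be careful to fix the pairing convention so that $\langle\sigma,\lambda\rangle$ in (\ref{eq:intertwining}) is consistent with the identifications $Z\cong\mathfrak z$ and $\widehat Z\cong\mathfrak z^*$ (i.e.\ $\langle\sigma,\lambda\rangle=\langle\sigma,\log\lambda\rangle$) used throughout the paper.
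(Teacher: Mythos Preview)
Your proof is correct and follows essentially the same route as the paper's: both construct $\mc T^{\Lambda}$ as the composition of the Plancherel transform, multiplication by $|\Pf|^{1/2}$, and the periodization/unfolding map, and both derive the intertwining relation by pushing $\mc F(L_{\gamma\lambda}f)=\pi_{\cdot}(\gamma\lambda)\mc Ff$ through this composition. Your account is in fact slightly more explicit than the paper's in part (ii), since you spell out that $e^{2\pi i\langle\lambda^*,\lambda\rangle}=1$ for $\lambda^*\in\Lambda^\perp$, $\lambda\in\Lambda$ is precisely what makes the scalar $\lambda^*$-independent, a step the paper simply absorbs into one line.
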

\begin{proof}  (i) Since the set $\mc W$ is Zariski open in $\mathfrak z^*$ and $\Pf(\sigma)$ is non vanishing on $\mc W$, the map 
	$$\mathscr U_2: L^2(\mc W, \HS(L^2(\mathbb R^d)), |\Pf(\sigma)|d\sigma) \ra L^2(\mathfrak z^*, \HS(L^2(\mathbb R^d))),  \quad  h\mapsto h |\Pf(\sigma)|^{1/2}$$
	 is unitary. 
	Further note that   $\Lambda^\perp$ is tiling partner of $\Sigma$ for $\widehat Z \cong \mathfrak z^*$, we can define a periodization map 
	$${\mathscr U_3} : L^2(\mathfrak{z^*},\HS(L^2(\mathbb R^d)))  \ra L^2 \left( \Sigma, \ell^2(\Lambda^\perp, \HS(L^2(\mathbb R^d)))\right),  \quad h \mapsto (h(\cdot +\lambda^*))_{\lambda^*\in \Lambda^\perp}.
	$$
	It  is  also unitary  by identifying  the linear  dual $\mathfrak{z^*}$ of $\mathfrak{z}$   with $\mathbb R^r$. Therefore we get  a sequence of unitary maps as follows:
	{\small
		\begin{align*}	
			L^2(G) \xrightarrow[{}]{\mathscr U_1}L^2\left(\mc W, \mc HS(L^2(\mathbb R^d)), |\Pf(w)|dw\right)\xrightarrow[{}]{\mathscr U_2}L^2(\mathfrak{z^*},\HS(L^2(\mathbb R^d))) \xrightarrow[{}]{\mathscr U_3} L^2 \left( \Sigma,\ell^2(\Lambda^\perp, \HS(L^2(\mathbb R^d)))\right),
		\end{align*}
	}   
	where the unitary map $\mathscr U_1$ is   Plancherel transform $\mc F$.
	
	For  $h \in L^2\left(\mc W, \mc HS(L^2(\mathbb R^d)), |\Pf(\sigma)|d\sigma\right)$, we observe  
	$$
	(\mathscr U_2\mathscr U_1)h(\sigma)=(\mathscr U_1h)(\sigma)|\Pf(\sigma)|^{1/2}=\mc Fh(\sigma)|\Pf(\sigma)|^{1/2}, \ a.e. \ \sigma\in \mc W
	$$
	and then for a.e. $\sigma \in \Sigma$,  $\lambda^* \in \Lambda^\perp$ and $f \in L^2 (G)$, we have 
	$$
	(\mathscr U_3\mathscr U_2\mathscr U_1)f(\sigma)(\lambda^*)=\left(\mathscr U_3(\mathscr U_2\mathscr U_1)f(\sigma)\right)(\lambda^*)=(\mathscr U_2\mathscr U_1)f(\sigma+\lambda^*)=\mc Ff(\sigma+\lambda^*)|\Pf(\sigma+\lambda^*)|^{1/2}.
	$$
	Thus the result follows by choosing   $\mc T^{\Lambda}=\mathscr U_3\mathscr U_2\mathscr U_1$.   
 
\noindent (ii)	For $\lambda^* \in \Lambda^\perp$ and a.e. $\sigma \in \Sigma$, we get    
	\begin{align*}
		\mc T^{\Lambda}(L_{\gamma\lambda}f)(\sigma)(\lambda^*)
		=&\mc F(L_{\gamma\lambda}f)(\sigma+\lambda^*)|\Pf(\sigma+\lambda^*)|^\frac{1}{2}=\pi_{\sigma+\lambda^*}(\gamma\lambda)\mc Ff(\sigma+\lambda^*)|\Pf(\sigma+\lambda^*)|^\frac{1}{2}\\
		=&e^{2\pi  i \langle \sigma,\lambda\rangle}\pi_{\sigma+\lambda^*}(\gamma)\mc Ff(\sigma+\lambda^*)|\Pf(\sigma+\lambda^*)|^\frac{1}{2}\\
		=&e^{2\pi  i \langle \sigma,\lambda\rangle}\left(\tilde\pi_{\sigma}(\gamma)\mc T^{\Lambda}f(\sigma)\right)(\lambda^*), \ 
	\end{align*}
since $\mc F (L_{\gamma} f )(\sigma)=\pi_{\sigma}(\gamma) \mc F f (\sigma)$.
\end{proof}
\begin{rk}
	For the case of Heisenberg group $\mathbb H^d$,  the unitary map $\mc T^{\Lambda}$ takes the form:
	$$\mc T^{\Lambda}: L^2(\mathbb H^d)\ra L^2([0,1) ,\ell^2(\mathbb Z, L^2(\mathbb R^d))),     \quad  \mc T^{\Lambda}f(\xi)(n)=\mc Ff(\xi+n)|\xi+n|^{d/2}, \ a.e. \ \xi\in [0,1), n\in \mathbb Z, f \in L^2 (\mathbb H^d).$$
	and the intertwining property of $\mc T^{\Lambda}$ gives $
	\mc T^{\Lambda}(L_{n \gamma}f)(\xi)=e^{2\pi i n \xi} \,  \tilde{\pi}_{\xi}(\gamma)\mc T^{\Lambda}f(\xi)$, where  $\gamma  \in A \mathbb Z^d \times B \mathbb Z^d$ such that  $AB^t\in \mathbb Z$.
	\end{rk}
Next we discuss the range function $J$ for a $\Gamma\Lambda$-invariant space.  The \textit{range function} is a mapping $J: \Sigma\ra \ \left\{\mbox{closed subspaces of }{\ell^2( \Lambda^\perp,\mc HS(L^2(\mathbb R^d)))}\right\}.$ It is  measurable if the projection map $P(\sigma):L^2(G)\ra J(\sigma)$ is weakly measurable i.e., for each $a,b\in \ell^2( \Lambda^\perp,\mc HS(L^2(\mathbb R^d))$
and $\sigma\mapsto\langle P(\sigma)a, b\rangle$ is measurable.
The space $W$ can be expressed as follows: $
W= \{\varphi\in L^2(G):\mc T^{\Lambda}\varphi(\sigma)\in J(\sigma), \ \mbox{for a.e.} \ \sigma \in \Sigma\} \ and \ \tilde \pi_{\sigma}(\Gamma)\subset J(\sigma).$ Also, there is bijection $W\mapsto J $. We refer  \cite{bar2014bracket, bownik2015structure,christensen2016introduction, currey2014characterization,ron1995frames} for more details about  shift invariant spaces and associated  range functions for the abelian and non-abelian  setups.

\begin{prop}\label{range  function} 
	The  range function  $J$ associated to  the $\Gamma\Lambda$-invariant space $W=\mc S^{\Gamma\Lambda}(\msc A)$ satisfies
	\begin{align}\label{J}
J(\sigma)=\ol{\sp} \{\mc T^{\Lambda}(L_{\gamma}\varphi)(\sigma):\varphi\in \msc A, \gamma\in\Gamma\},  \ \mbox{a.e.} \ \sigma\in \Sigma.
	\end{align}
\end{prop}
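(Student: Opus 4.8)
The plan is to mimic the standard shift-invariant-space argument of Bownik--Ross / Currey et al., transported through the unitary $\mc T^{\Lambda}$ of Proposition~\ref{fiberization}. Write $J(\sigma)$ for the right-hand side of \eqref{J}, i.e. $J(\sigma)=\ol{\sp}\{\mc T^{\Lambda}(L_\gamma\varphi)(\sigma):\varphi\in\msc A,\gamma\in\Gamma\}$, and let $\tilde J(\sigma)$ be the range function associated (via the bijection $W\mapsto J$ recalled after Proposition~\ref{fiberization}) to the $\Gamma\Lambda$-invariant space $W=\mc S^{\Gamma\Lambda}(\msc A)$; the goal is $J(\sigma)=\tilde J(\sigma)$ a.e. First I would check that $\sigma\mapsto J(\sigma)$ is a measurable range function: this follows because each $\sigma\mapsto\mc T^{\Lambda}(L_\gamma\varphi)(\sigma)$ is (by unitarity of $\mc T^{\Lambda}$) a measurable vector field in $\ell^2(\Lambda^\perp,\HS(L^2(\mathbb R^d)))$, $\msc A$ and $\Gamma$ are countable, and the closed span of countably many measurable fields has measurable fibrewise projections.

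Next I would let $W'$ be the $\Gamma\Lambda$-invariant space determined by the measurable range function $J$, namely $W'=\{\varphi\in L^2(G):\mc T^{\Lambda}\varphi(\sigma)\in J(\sigma)\ \text{a.e. }\sigma\in\Sigma\}$, and show $W'=\mc S^{\Gamma\Lambda}(\msc A)$, which by the bijectivity of $W\mapsto J$ forces $J=\tilde J$. For the inclusion $\mc S^{\Gamma\Lambda}(\msc A)\subseteq W'$: by the intertwining identity \eqref{eq:intertwining}, $\mc T^{\Lambda}(L_{\gamma\lambda}\varphi)(\sigma)=e^{2\pi i\langle\sigma,\lambda\rangle}\tilde\pi_\sigma(\gamma)\mc T^{\Lambda}\varphi(\sigma)$, and since each $\tilde\pi_\sigma(\gamma)$ is built from the unitaries $\pi_{\sigma+\lambda^*}(\gamma)$ composed coordinatewise, it maps $J(\sigma)$ into itself (as $J(\sigma)$ already contains $\mc T^{\Lambda}(L_\gamma\varphi)(\sigma)$ and, via $\tilde\pi_\sigma(\gamma')$, the fields $\mc T^{\Lambda}(L_{\gamma'\gamma}\varphi)(\sigma)$—one should argue $\tilde\pi_\sigma(\Gamma)$-invariance of $J(\sigma)$ directly using that $\Gamma$ contains $e$ and, more carefully, that the closed span over all of $\Gamma$ is $\tilde\pi_\sigma(\gamma')$-stable because left translation composes). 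Hence every generator $L_{\gamma\lambda}\varphi$ lies in $W'$, and since $W'$ is a closed subspace it contains the closed span $\mc S^{\Gamma\Lambda}(\msc A)$.

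For the reverse inclusion $W'\subseteq\mc S^{\Gamma\Lambda}(\msc A)$: suppose $h\in W'\ominus\mc S^{\Gamma\Lambda}(\msc A)$. Then for all $\varphi\in\msc A$, $\gamma\in\Gamma$, $\lambda\in\Lambda$ we have $\langle h,L_{\gamma\lambda}\varphi\rangle=0$; transporting by the unitary $\mc T^{\Lambda}$ and using \eqref{eq:intertwining} this reads
\begin{align*}
0=\int_{\Sigma}e^{-2\pi i\langle\sigma,\lambda\rangle}\big\langle\mc T^{\Lambda}h(\sigma),\tilde\pi_\sigma(\gamma)\mc T^{\Lambda}\varphi(\sigma)\big\rangle\,d\mu(\sigma)\quad\text{for all }\lambda\in\Lambda.
\end{align*}
Since $\{e^{2\pi i\langle\sigma,\lambda\rangle}:\lambda\in\Lambda\}$ is an orthogonal basis of $L^2(\Sigma)$ (as $\Sigma$ is a fundamental domain for $\widehat Z/\Lambda^\perp$), the function $\sigma\mapsto\langle\mc T^{\Lambda}h(\sigma),\tilde\pi_\sigma(\gamma)\mc T^{\Lambda}\varphi(\sigma)\rangle$ vanishes a.e.; letting $\varphi,\gamma$ range over the countable set $\msc A\times\Gamma$ we get $\mc T^{\Lambda}h(\sigma)\perp\tilde\pi_\sigma(\gamma)\mc T^{\Lambda}\varphi(\sigma)$ for all $\varphi,\gamma$ and a.e.\ $\sigma$, hence $\mc T^{\Lambda}h(\sigma)\perp J(\sigma)$ a.e. But $h\in W'$ means $\mc T^{\Lambda}h(\sigma)\in J(\sigma)$ a.e., so $\mc T^{\Lambda}h(\sigma)=0$ a.e., i.e. $h=0$. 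This proves $W'=\mc S^{\Gamma\Lambda}(\msc A)$, and therefore $J$ coincides with the range function of $\mc S^{\Gamma\Lambda}(\msc A)$.

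The main obstacle I anticipate is the bookkeeping around $\tilde\pi_\sigma(\Gamma)$-invariance of the fibre $J(\sigma)$: one must be careful that $\ol{\sp}\{\mc T^{\Lambda}(L_\gamma\varphi)(\sigma)\}$ is genuinely invariant under each $\tilde\pi_\sigma(\gamma')$, which relies on $\Gamma$ being such that $L_{\gamma'}L_\gamma=L_{\gamma'\gamma}$ stays within the generating family up to the closed span (this is where the structure of $\Gamma\subset\mc X$ and the definition of $\mc S^{\Gamma\Lambda}$ as a closed span enter), together with continuity of $\gamma'\mapsto\tilde\pi_\sigma(\gamma')$ so that the closure is preserved; the measurability of $\sigma\mapsto J(\sigma)$ and the interchange of the $\Lambda$-sum/integral via the Fourier basis on $\Sigma$ are routine once the countability of $\msc A$ and $\Gamma$ is invoked.
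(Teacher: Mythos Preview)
Your proposal is correct and follows essentially the same strategy as the paper: both arguments show $\mc T^{\Lambda}(\mc S^{\Gamma\Lambda}(\msc A))=M_J$ (your $W'$ is just $(\mc T^{\Lambda})^{-1}M_J$), with one inclusion via the intertwining relation \eqref{eq:intertwining} and the other via the vanishing of all $\Lambda$-Fourier coefficients of $\sigma\mapsto\langle f(\sigma),h(\sigma)\rangle$.

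One remark: the ``main obstacle'' you flag, namely $\tilde\pi_\sigma(\Gamma)$-invariance of $J(\sigma)$, is not actually needed, and worrying about whether $\gamma'\gamma\in\Gamma$ is a distraction. Setting $\lambda=e$ in \eqref{eq:intertwining} gives $\tilde\pi_\sigma(\gamma)\mc T^{\Lambda}\varphi(\sigma)=\mc T^{\Lambda}(L_\gamma\varphi)(\sigma)$, so for arbitrary $\gamma\in\Gamma$, $\lambda\in\Lambda$ one has
\[
\mc T^{\Lambda}(L_{\gamma\lambda}\varphi)(\sigma)=e^{2\pi i\langle\sigma,\lambda\rangle}\,\mc T^{\Lambda}(L_\gamma\varphi)(\sigma),
\]
which is a scalar multiple of a generator of $J(\sigma)$ and hence lies in $J(\sigma)$ immediately. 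No group structure on $\Gamma$ and no invariance of the fibre under $\tilde\pi_\sigma(\gamma')$ is required; the paper's proof uses exactly this shortcut.
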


	\begin{proof}	From the intertwining property (\ref{eq:intertwining}), we get $\mc T^{\Lambda} (L_{\gamma\lambda}\varphi)(\sigma)= e^{2\pi i\langle\sigma,\lambda \rangle} \tilde\pi(\gamma)\mc T^{\Lambda}\varphi(\sigma)$,    for  $\gamma\lambda\in \Gamma\Lambda,$ and a.e. $\sigma\in \Sigma$,   and hence,    $\mc T^{\Lambda} (\mc S^{\Gamma\Lambda}(\msc A))$ is invariant under exponential and $\tilde \pi(\Gamma) \mc T^{\Lambda} (\mc S^{\Gamma\Lambda}(\msc A))\subset \mc T^{\Lambda} (\mc S^{\Gamma\Lambda}(\msc A))$. Therefore, we get the result by observing $\mc T^{\Lambda} (\mc S^{\Gamma\Lambda}(\msc A))=M_{J}$, where the space $M_J$ is defined  by  
		\begin{align}\label{MJ}
		M_J=\{f\in L^2( \Sigma, \ell^2(  \Lambda^\perp,\mc HS(L^2(\mathbb R^d)): f(\sigma)\in J(\sigma) \ \mbox{for a.e.}\ \sigma\in \Sigma\},  
		\end{align}
	  for the range function $J$ given in (\ref{J}).  For this let us consider a function   $g\in \mc T^{\Lambda} (\mc S^{\Gamma\Lambda}(\msc A))$. Choose a sequence $(g_i)$   converging  to $g$ such that 
		${\mc T^{\Lambda}}^{-1}g_i\in \sp\{L_{\gamma\lambda}\varphi:\gamma\lambda\in \Gamma\Lambda, \varphi\in \msc A\}$. Then   we have 
		$g_i(\sigma)\in J(\sigma)$ in view of  (\ref{eq:intertwining}), and hence $g(\sigma)\in J(\sigma)$ since $J(\sigma)$ is closed. Therefore,  $g\in M_{J}$, i.e.,  $\mc T^{\Lambda} (\mc S^{\Gamma\Lambda}(\msc A))\subset M_{J}$. For the equality $\mc T^{\Lambda} (\mc S^{\Gamma\Lambda}(\msc A))= M_{J}$, we need to show $\mc T^{\Lambda} (\mc S^{\Gamma\Lambda}(\msc A))^\perp \cap  M_{J}=0.$ Choose a function $h\in \mc T^{\Lambda} (S^{\Gamma\Lambda}(\msc A))^\perp \cap  M_{J}$.  
		 Then  for any $f\in  \mbox{span}\{\mc T^{\Lambda} (L_{\gamma}\varphi):\gamma\in\Gamma, \varphi\in\msc A\}$ and $\lambda\in \Lambda$, we have $e^{2\pi i\langle \cdot, \lambda\rangle } f(\cdot)\in \mc T^{\Lambda} (\mc S^{\Gamma\Lambda}(\msc A)) $, and then  we obtain
			\begin{align*}
		0=\int_{\Sigma}\langle e^{2\pi i\langle \cdot, \lambda\rangle }f(\sigma),h(\sigma) \rangle d\sigma=\int_{\Sigma} e^{2\pi i\langle \cdot, \lambda\rangle }\langle f(\sigma),h(\sigma) \rangle d\sigma,  
		\end{align*}
Hence all the Fourier coefficients of a scalar function given by $\sigma\mapsto\langle f(\sigma),h(\sigma) \rangle $ are zero.
	Thus,    $\langle f(\sigma),h(\sigma) \rangle=0$ a.e.  $\sigma\in \Sigma$ and  $f(\sigma)\in  J(\sigma)$, i.e.,  $h(\sigma)\in  J(\sigma)^\perp$, a.e. $\sigma\in \Sigma$.  
	\end{proof}

Employing the Proposition \ref{range  function} we characterize a member of  $\mc S^{\Gamma\Lambda}(\varphi)$ with the help of Plancherel transform.

\begin{prop}\label{decomposition}  For 
 $f\in \mc S^{\Gamma\Lambda}(\varphi)$,   the  Plancherel transform   of  $f$ is given  by 
\begin{align}\label{PTf}
\mc  F f(\omega)=\sum_{\gamma\in \Gamma} \beta_{\gamma}(\omega)\mc F ({L_{\gamma}\varphi})(\omega), \quad a.e. \ \omega\in \mathfrak z^*, 
\end{align}
where $\beta_\gamma$ is a $\Lambda^\perp$--periodic function. Conversely if $\beta_{\gamma}$ is an $\Lambda^\perp$-periodic function such that 
$$\sum_{\gamma\in \Gamma} \beta_{\gamma}(\cdot)\mc F({L_{\gamma}\varphi})(\cdot)\in L^2(\mathfrak z^*;{\mc HS}({ L^2(\mathbb R^d)}),
$$
 then the function $f$ defined by (\ref{PTf})  is a members of  $\mc  S^{\Gamma\Lambda}(\varphi).$
\end{prop}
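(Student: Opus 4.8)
The plan is to push everything down to the fiber space via the unitary $\mc T^{\Lambda}$ of Proposition \ref{fiberization}, and to combine the description $J(\sigma)=\ol{\sp}\{\mc T^{\Lambda}(L_{\gamma}\varphi)(\sigma):\gamma\in\Gamma\}$ from Proposition \ref{range function} (applied with $\msc A=\{\varphi\}$) with the characterization, recorded just before that proposition, that $\mc S^{\Gamma\Lambda}(\varphi)=\{g\in L^2(G):\mc T^{\Lambda}g(\sigma)\in J(\sigma)\ \text{for a.e.}\ \sigma\in\Sigma\}$. The elementary bridge I would use repeatedly is this: writing $\omega=\sigma+\lambda^*$ with $\sigma\in\Sigma$ and $\lambda^*\in\Lambda^\perp$, a function $\beta$ on $\mathfrak z^*$ is $\Lambda^\perp$-periodic exactly when $\beta(\omega)$ depends only on $\sigma$, and then for $g\in L^2(G)$ with $\beta\cdot\mc Fg\in L^2$ the function $g'$ determined by $\mc Fg'=\beta\cdot\mc Fg$ satisfies $\mc T^{\Lambda}g'(\sigma)=\beta(\sigma)\,\mc T^{\Lambda}g(\sigma)$ for a.e.\ $\sigma$; in particular $\mc T^{\Lambda}(L_{\gamma}\varphi)(\sigma)(\lambda^*)=\mc F(L_{\gamma}\varphi)(\sigma+\lambda^*)|\Pf(\sigma+\lambda^*)|^{1/2}$.

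For the converse ($\Leftarrow$): given $\beta_{\gamma}$ that is $\Lambda^\perp$-periodic with $h:=\sum_{\gamma\in\Gamma}\beta_{\gamma}(\cdot)\mc F(L_{\gamma}\varphi)(\cdot)\in L^2(\mathfrak z^*;\HS(L^2(\mathbb R^d)))$, I would set $f=\mc F^{-1}h$ and use the bridge to get $\mc T^{\Lambda}f(\sigma)=\sum_{\gamma}\beta_{\gamma}(\sigma)\,\mc T^{\Lambda}(L_{\gamma}\varphi)(\sigma)$ in $\ell^2(\Lambda^\perp;\HS(L^2(\mathbb R^d)))$, the series converging for a.e.\ $\sigma$ after passing to a subsequence of its finite partial sums (legitimate since $\mc T^{\Lambda}$ is unitary and $h\in L^2$). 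Every such partial sum lies in the closed subspace $J(\sigma)$, hence so does $\mc T^{\Lambda}f(\sigma)$ for a.e.\ $\sigma$, and therefore $f\in\mc S^{\Gamma\Lambda}(\varphi)$.

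For the direct implication ($\Rightarrow$): starting from $f\in\mc S^{\Gamma\Lambda}(\varphi)$, the characterization gives $\mc T^{\Lambda}f(\sigma)\in J(\sigma)$ for a.e.\ $\sigma$. When $f=\sum_{\gamma,\lambda}c_{\gamma\lambda}L_{\gamma\lambda}\varphi$ is a finite combination the conclusion is immediate, with $\beta_{\gamma}(\omega)=\sum_{\lambda}c_{\gamma\lambda}e^{2\pi i\langle\omega,\lambda\rangle}$ a $\Lambda^\perp$-periodic trigonometric polynomial. For a general $f$ I would instead produce the coefficients fiberwise: choose, measurably in $\sigma$, scalars $\beta_{\gamma}(\sigma)$ with $\mc T^{\Lambda}f(\sigma)=\sum_{\gamma}\beta_{\gamma}(\sigma)\,\mc T^{\Lambda}(L_{\gamma}\varphi)(\sigma)$, extend each $\beta_{\gamma}$ to a $\Lambda^\perp$-periodic function on $\mathfrak z^*$, and then read off $(\ref{PTf})$ by cancelling $|\Pf(\omega)|^{1/2}$ in the bridge identity; the membership $\mc Ff\in L^2(\mathfrak z^*;\HS(L^2(\mathbb R^d)))$ is then automatic from $f\in L^2(G)$.

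The hard part is precisely this measurable choice of the coefficients $\beta_{\gamma}(\sigma)$: the trigonometric-polynomial coefficients of an $L^2(G)$-approximating sequence for $f$ need not converge, so one cannot simply take limits. I would handle it fiberwise on $\Sigma$. When $\Gamma$ is finite, take the $\beta_{\gamma}(\sigma)$ to be the entries of the Moore--Penrose pseudo-inverse of the (measurable) Gramian matrix $\big[\langle\mc T^{\Lambda}(L_{\gamma}\varphi)(\sigma),\mc T^{\Lambda}(L_{\gamma'}\varphi)(\sigma)\rangle\big]_{\gamma,\gamma'\in\Gamma}$ applied to the vector $\big[\langle\mc T^{\Lambda}f(\sigma),\mc T^{\Lambda}(L_{\gamma}\varphi)(\sigma)\rangle\big]_{\gamma\in\Gamma}$; the resulting combination of the $\mc T^{\Lambda}(L_{\gamma}\varphi)(\sigma)$ equals the orthogonal projection of $\mc T^{\Lambda}f(\sigma)$ onto $J(\sigma)$, which is $\mc T^{\Lambda}f(\sigma)$ itself. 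For countable $\Gamma$ I would run a measurable Gram--Schmidt orthogonalization of $\{\mc T^{\Lambda}(L_{\gamma}\varphi)(\sigma)\}_{\gamma\in\Gamma}$, expand $\mc T^{\Lambda}f(\sigma)$ in the resulting orthonormal fiber system, and then re-expand in the original generators; convergence of the final series in $L^2(\mathfrak z^*;\HS(L^2(\mathbb R^d)))$ follows since $\sigma\mapsto\|\mc T^{\Lambda}f(\sigma)\|$ is square-integrable. Verifying these measurability and convergence points is the only real content; the rest is bookkeeping with the intertwining relation $(\ref{eq:intertwining})$.
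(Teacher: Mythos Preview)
Your approach is sound and shares the paper's overall architecture: push down to fibers via $\mc T^{\Lambda}$, use the range-function description $J(\sigma)=\ol{\sp}\{\mc T^{\Lambda}(L_{\gamma}\varphi)(\sigma):\gamma\in\Gamma\}$ of $\mc S^{\Gamma\Lambda}(\varphi)$, and then pass back to $\mathfrak z^*$ by cancelling the $|\Pf|^{1/2}$ weight. The converse direction is handled essentially identically in both write-ups.

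Where you diverge is in the construction of the coefficients $\beta_{\gamma}$ for the direct implication. The paper does not go through an approximation or a measurable-selection argument; it writes down a single explicit formula,
\[
\beta_{\gamma}(\sigma)=\frac{\langle\mc T^{\Lambda}f(\sigma),\mc T^{\Lambda}(L_{\gamma}\varphi)(\sigma)\rangle}{\|\mc T^{\Lambda}(L_{\gamma}\varphi)(\sigma)\|^2}\qquad(\text{and }0\text{ where the denominator vanishes}),
\]
and asserts that $P(\sigma)\mc T^{\Lambda}f(\sigma)$ equals the resulting sum $\sum_{\gamma}\beta_{\gamma}(\sigma)\,\mc T^{\Lambda}(L_{\gamma}\varphi)(\sigma)$. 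That identity is the orthogonal-expansion formula and is only literally correct when the fiber vectors $\{\mc T^{\Lambda}(L_{\gamma}\varphi)(\sigma)\}_{\gamma\in\Gamma}$ are mutually orthogonal, a hypothesis the paper neither states nor verifies. Your Moore--Penrose / measurable Gram--Schmidt route is precisely the honest version of this step: it produces coefficients that genuinely reconstruct $\mc T^{\Lambda}f(\sigma)$ regardless of whether the generators are orthogonal, at the price of the routine measurability check you flag (the Gramian entries are measurable in $\sigma$, and the pseudoinverse is a Borel function of the matrix). In short, the paper's argument is shorter and gives a clean closed form for $\beta_{\gamma}$ but tacitly assumes fiberwise orthogonality; your argument is longer but closes that gap and works without additional hypotheses on $\Gamma$ or on $\varphi$.
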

\begin{proof}  Applying the Plancherel transform followed by periodization $\mc T^{\Lambda}$ on a function $f \in \mc  S^{\Gamma\Lambda}(\varphi)$, we get 
\begin{align}\label{Tf}
\mc T^{\Lambda}f(\sigma)=\mc T^{\Lambda}(Pf)(\sigma)=P(\sigma)\mc T^{\Lambda}f(\sigma)=\sum_{\gamma\in \Gamma} \frac{\langle\mc T^{\Lambda}f(\sigma),\mc T^{\Lambda}(L_{\gamma}\varphi)(\sigma)\rangle }{\|\mc T^{\Lambda}(L_{\gamma}\varphi)(\sigma)\|^2}\mc T^{\Lambda}(L_{\gamma}\varphi)(\sigma), \  a.e. \ \sigma \in \Sigma, 
\end{align}
 in view of Proposition \ref{range  function}  and commutativity of $P$ and $\mc T^{\Lambda}$, where $P$  and $P(\sigma)$  are  orthogonal projections on $\mc S^{\Gamma\Lambda}(\varphi)$ and  $J(\sigma)$,  respectively. The above expression (\ref{Tf})  can be written as $ 
\mc T^{\Lambda}f(\sigma)=\sum_{\gamma\in \Gamma} \beta_{\gamma} (\sigma) \mc T^{\Lambda}L_{\gamma}\varphi(\sigma),$ for a.e. $\sigma \in \Sigma$, 
  where   the $\Lambda^\perp$--periodic function $\beta_\gamma$ is defined by 
  $$\beta_{\gamma}(\sigma)=\begin{cases}
\frac{\langle\mc T^{\Lambda}f(\sigma),\mc  T^\Lambda (L_{\gamma}\varphi)(\sigma)\rangle }{\|\mc  T^\Lambda (L_{\gamma}\varphi)(\sigma)\|^2}=\mathlarger\sum_{\lambda^* \in \Lambda^\perp}\frac{\langle\mc T^{\Lambda}f(\sigma)(\lambda^*),\mc  T^\Lambda (L_{\gamma}\varphi)(\sigma)(\lambda^*)\rangle }{\|\mc  T^\Lambda (L_{\gamma}\varphi)(\sigma)\|^2}, \ &\sigma \in\Sigma_{\varphi}(\sigma)=\left\{\sigma\in\Sigma:\|\mc  T^\Lambda (L_{\gamma}\varphi)(\sigma)\|^2\neq 0\right\}\\
0, \ &\mbox{otherwise.}
\end{cases}
$$
 The function $\beta_{\gamma}$  can  be extended  periodically on  $\widehat Z$ since  $\Lambda^\perp$ is a tiling partner of $\Sigma$ for $\widehat Z$.  Also observe that for any $w\in \widehat Z$, there exists unique $\sigma\in \Sigma, \lambda^*\in \Lambda^\perp$ such that $w=\sigma+\lambda^*$, and hence   from (\ref{Tf}), we obtain
 \begin{align*}
 \mc F f(w)|\Pf(\omega)| &=\mc F f(\sigma+\lambda^*)|\Pf(\sigma+\lambda^*)|=\mc T^{\Lambda}f(\sigma)(\lambda^*)=\sum_{\gamma\in \Gamma} \beta_{\gamma}(\sigma) \mc T^{\Lambda}(L_{\gamma}\varphi)(\sigma)(\lambda^*)\\
 &=\sum_{\gamma\in \Gamma} \beta_{\gamma}(\sigma+\lambda^*) \mc F ({L_{\gamma}\varphi})(\sigma)(\lambda^*)|\Pf(\sigma+\lambda^*)|\\
 &=\sum_{\gamma\in \Gamma} \beta_{\gamma}(w) \mc F ({L_{\gamma}\varphi})(w)|\Pf(\omega)|.
\end{align*} 
 The converse part follows  from  the above calculations by writing   
$\mc F f(\cdot)=\sum_{\gamma\in \Gamma} \beta_{\gamma}(\cdot)\mc F ({{L_{\gamma}\varphi}})(\cdot)$, in the form 
$\mc  T^\Lambda f(\sigma)=\sum_{\gamma\in \Gamma} \beta_{\gamma}(\sigma)\mc  T^\Lambda (L_{\gamma}\varphi)(\sigma)$, and noting   $\mc  T^\Lambda f(\sigma)\in J(\sigma)$, gives  $f\in \mc S^{\Gamma\Lambda}(\varphi)$  from   Proposition \ref{range function}.
\end{proof}

\begin{proof}[Proof of Lemma \ref{lemmaWj}] For  $j\in \mc J$, let us consider the space  $V_j^\Theta =\{f\in L^2(G): \widehat f= \chi_{\mc H_j^{\Theta}}\widehat g, \ \mbox{for some} \  g \in W\}$, 
where $\mc H_j^{\Theta}=\Sigma+j+\Theta^\perp$. To prove it as a $\Gamma\Theta$-invariant subspace of $L^2(G)$, we first  assume a sequence $(\varphi_k)$ in $V_j^\Theta$ converging to $\varphi \in L^2(G).$ Then $\varphi\in W$ since $V_j^\Theta\subset W$ and $W$ is closed, and hence,  $\varphi\in V_j^\Theta$. This follows by writing $\widehat \varphi=\chi_{\mc H_{j}^\Theta}\widehat \varphi$ since  $\|\varphi_k-\varphi\| \rightarrow 0$ implies  $\widehat \varphi\chi_{(\mc H_{j}^\Theta)^c}=0$    from 
\begin{equation*}
\|\varphi_k- \varphi\|^2  \geq  \|(\widehat \varphi_k-\widehat \varphi)\chi_{(\mc H_{j}^\Theta)^c}\|^2 \geq \|\widehat \varphi\chi_{(\mc H_{j}^\Theta)^c}\|^2, \quad \mbox{where c denotes complement of the set}.
\end{equation*}
 Therefore,  $V_j^\Theta$ is closed. Further we observe that if $f\in {V_j^\Theta}$,  we have $\widehat f=\chi_{\mc H_{j}^\Theta} \widehat g$,  for some $g\in W$, and hence for $\theta \in \Theta$ and $\gamma \in \Gamma$, we can write $ e^{2\pi i\langle 
 	\omega,\theta \rangle} \mc F(L_{\gamma}f)(\omega)=\chi_{\mc H_{j}^\Theta}(\omega)e^{2\pi i\langle \omega,
 	\theta \rangle}\mc F(L_{\gamma}g)(\omega),$ for $\omega \in \mathfrak z^*$ since  $\mc F(L_{\gamma}g)(\omega)= \pi_{\omega} (\gamma) \widehat g(\omega)$. For the $\Gamma \Theta$-invariant it suffices to show $e^{2\pi i\langle \omega,
 	\theta\rangle}\mc F(L_{\gamma}g)(\omega) \in \mc F(W)$ that  gives   $e^{2\pi i\langle \omega,
 	\theta\rangle} \mc F(L_{\gamma}f)(\omega) \in \mc F (V_j^\Theta)$.  Observe  that $e^{2\pi i\langle \omega,
 	\theta\rangle}\mc F(L_{\gamma}g)(\omega) \in \mc F(\mc S^{\Gamma\Lambda} (g)) \subset  \mc F(W)$ 
 due to the converse part of   Proposition \ref{decomposition},  provided  $t_\theta (\omega):=e^{2\pi i\langle \omega,
 	\theta\rangle}  \ a.e. \ \omega \in \mc H_{j}^\Theta$, is a $\Lambda^\perp$-periodic function.  
 Since $e^{2\pi i\langle \cdot,
 	\theta\rangle}$ is $\Theta^\perp$-periodic, we have 
 $e^{2\pi i\langle 
 	\sigma +j,\theta \rangle}=e^{2\pi i\langle 
 	\sigma +j+\theta^*,\theta \rangle}$, for a.e $\sigma \in \Sigma$,  $j \in \mc J$,   and for every  $\theta^* \in \Theta^\perp$, and then for each $\lambda^* \in \Lambda^\perp$, we define 
$t_\theta(\sigma +\lambda^*)=e^{2\pi i\langle 
	\sigma +j,\theta\rangle}$, a.e. $\sigma \in \Sigma$. Thus the function $t_\theta$ is  $\Lambda^\perp$-periodic  on $\Sigma$,   can be extended to $\mathfrak z*$ since $\Lambda^\perp$ is tiling partner of $\Sigma$ for $\mathfrak z^* \cong  \widehat Z$. 
\end{proof}

\begin{proof}[{Proof of Theorem \ref{extra-invariance result}}] For each $j\in \mc J$, if $V_j^\Theta\subset W$, the space $V_j^\Theta$ is $\Gamma \Theta$-invariant from Lemma \ref{lemmaWj}, and hence the space  $\bigoplus_{j\in \mc J} V_j^\Theta \subset W$ is so. Since  $\{\mc H_{j}^\Theta\}_{j\in \mc J}$ is a tiling  of $\widehat Z\cong \mathfrak z^*$, therefore any element $f \in W$ can be written as $\widehat f(\omega)=\sum_{j\in \mc J} \widehat g_j(\omega)$, a.e. $\omega \in \mathfrak z^*$,  where $\widehat g_j=\widehat f \chi_{\mc H_{j}^\Theta}.$  By the definition of $V_j^\Theta$, $g_j \in V_j^\Theta$  for every $j \in \mc J$ and hence $f \in \bigoplus_{j \in \mc J} V_j^\Theta$. Therefore, $W$ is  $\Gamma\Theta$-invariant.

	\noindent 	Conversely,  let us assume that the $\Gamma
	\Lambda$-invariant space $W$ is   $\Gamma
	\Theta$-invariant. For  $V_j^\Theta\subset W$, we choose $f\in V_j^\Theta$. Then,  we have    $\widehat f=\chi_{\mc H_{j}^\Theta}\widehat g$, for some $g\in W$. Employing  the Plancahrel transform followed by periodization   $\mc T^{\Theta}$ from   $L^2(G)$ to $L^2( \mc D, \ell^2(  \Theta^\perp,\mc HS(L^2(\mathbb R^d)))$ given by 	$\mc T^{\Theta}f(\delta)(\theta^*)=\mc Ff(\delta+\theta^*)|\Pf(\delta+\theta^*)|^{1/2}, \   f\in L^2(G),  \theta^* \in \Theta^\perp \  \mbox{and a.e.} \ \delta \in \mc D$, where $\mc D$ is the  Borel section  of $\widehat Z/\Theta^\perp$, 
	we obtain 
	$$\mc T^{\Theta} f(\delta)(\theta^*)=\chi_{\mc H_{j}^\Theta}(\delta)\mc T^\Theta g(\delta)(\theta^*),$$ 
	since $\chi_{\mc H_{j}^\Theta}$ is $\Theta^\perp$-periodic due to the definition of $\mc H_{j}^\Theta$ in (\ref{Hj}). Then  for a.e. $\delta\in\mc D$, we have $\mc T^\Theta (L_{e} g)(\delta) \in  J^\Theta(\delta)$, where    $J^\Theta(\delta)=\ol{\sp} \{\mc T^{\Theta}(L_{\gamma}g)(\delta): \gamma\in\Gamma\},$ and hence  $\mc T^\Theta f(\delta)\in J^\Theta(\delta)$, for a.e. $\delta\in\mc D$. Thus $f\in \mc S^{\Gamma\Theta}(g)\subset W$ due to Proposition \ref{range  function}. \end{proof}

	\begin{proof}[Proof of Theorem\ref{TI_extra-invariance result}]
For  $j\in \mc J$, assume  $V_j^\Theta =\{f\in L^2(G): \widehat f= \chi_{\mc H_j^{\Theta}}\widehat g, \ \mbox{for some} \  g \in W\}$, and $W_j=\{f\in L^2(G): \mbox{supp}(\widehat  f)\subset {\mc H^\Theta_j}\}$, where $\mc H_j^{\Theta}=\Sigma+j+\Theta^\perp$. Let $P_{j}$ be the orthogonal projection on $W_j$. Then 
	$$P_{j}(\mc S^{\Gamma\Lambda}(\msc A))=\{f^j:\widehat f^j=\widehat f\chi_{\mc H^\Theta_j}, f\in \mc S^{\Gamma\Lambda}(\msc A) \}=V_j^\Theta,$$
	whose associated range function  is $J_{V_j^\Theta}(\sigma)=\overline{\sp}\{\mc T^{\Lambda}(L_{\gamma}\varphi^j)(\sigma):\varphi\in\msc A, \gamma\in \Gamma, \widehat \varphi^j=\widehat \varphi \chi_{\mc H^{\Theta}_j}\} $,  for a.e. $\sigma \in \Sigma$. Therefore from Theorem \ref{extra-invariance result},  $S^{\Gamma\Lambda}(\msc A)$ is a $\Gamma \Theta$-invariant if and only if $V_j^\Theta \subset \mc S^{\Gamma\Lambda}(\msc A)$, for each $j \in \mc J$.    Further it is   equivalent to  $J_{V_j^\Theta}(\sigma) \subset J(\sigma)$,  for a.e. $\sigma\in \Sigma, \ \mbox{for all} \ j\in \mc J$, where $J(\sigma)$ is the range function associated to $\mc S^{\Gamma\Lambda}(\msc A)$, see Proposition \ref{range  function} and \cite{bownik2015structure}. Thus the result follows.  
\end{proof}

 \begin{proof}[Proof of Theorem \ref{dimension}] From Theorem \ref{extra-invariance result},  we have $ W= \displaystyle\bigoplus_{j\in \mc J}V_j^\Theta$ if $W$ is $\Gamma\Theta$-invariant. Then   for a.e. $\sigma\in \Sigma$,  the range function satisfies $J_W(\sigma)=\bigoplus_{j\in \mc J} J_{V_j^\Theta}(\sigma)$, follows by observing the orthogonality of $J_{V_j^\Theta}(\sigma)$ and $J_{V_{j'}^\Theta}(\sigma)$, for $j\neq j'$,
 	since $\{\mc H_{j}^\Theta\}_{j\in \mc J}$ is a tiling  of $\widehat Z\cong \mathfrak z^*$. Hence  $\dim_W(\sigma)=\sum_{j\in\mc J} \dim_{V_j^\Theta}(\sigma)$, a.e. $\sigma\in \Sigma$. 
 	
 	For the converse part, first observe that the $\Gamma\Lambda$-invariant space $W$ is contained in $\bigoplus_{j \in \mc J} V_j^\Theta$. This follows by writing  $f \in W$ as $\widehat f(\omega)=\sum_{j\in \mc J} \widehat g_j(\omega)$, a.e. $\omega \in \mathfrak z^*$,  where $\widehat g_j=\widehat f \chi_{\mc H_{j}^\Theta}$ since $\{\mc H_{j}^\Theta\}_{j\in \mc J}$ is a tiling  of $\widehat Z\cong \mathfrak z^*$. Then the range function satisfies $J_W(\sigma)\subset \bigoplus_{j\in \mc J}J_{V_j^\Theta}(\sigma)$, and hence we have $J_W(\sigma)=\bigoplus_{j\in \mc J}J_{V_j^\Theta}(\sigma)$ for a.e. $\sigma\in \Sigma$ due to the condition $\dim_W(\sigma)=\sum_{j\in\mc J} \dim_{V_j^\Theta}(\sigma)$, a.e. $\sigma\in \Sigma$. Therefore we get $J_{V_j^\Theta}(\sigma)\subset J_W(\sigma)$, for each $j \in \mc J$, i.e, $V_j^\Theta \subset W$, for all $j$. Thus $W$ is $\Gamma\Theta$-invariant follows from Theorem \ref{extra-invariance result}.
\end{proof}

\begin{proof}[Proof of Theorem \ref{measure}]
		For  $\theta^*\in \Theta^\perp$ and $\varphi \in \msc A$, we first estimate the measure of following expression:  
	\begin{align*}
	\mu\left(\left\{(\sigma, j) \in \Sigma \times \mc J:\widehat{\varphi}(\sigma+j+\theta^*)\neq 0\right\}\right)&=\mu\left(\left\{(\sigma, j) \in \Sigma \times \mc J: \tilde{\pi}_{\sigma+j+\theta^*}(\gamma)\widehat{\varphi}(\sigma+j+\theta^*)\neq 0\right\}\right), \mbox{for any} \ \gamma \in \Gamma\\
	&=\mu\left(\left\{(\sigma, j) \in \Sigma \times \mc J: \mc F (L_{\gamma}\varphi)(\sigma+j+\theta^*)\neq 0\right\}\right)\\
	  &=\int_{\Sigma} \left|S_\sigma^{\mc J} \right| d\sigma,
	\end{align*}
where the set $S_\sigma^{\mc J} :=\{j \in \mc J :\mc F (L_{\gamma}\varphi)(\sigma+j+\theta^*)\neq 0\}$ and $|S_\sigma^{\mc J}|$ denotes the cardinality of $S_\sigma^{\mc J}$.  For a.e. $\sigma \in \Sigma$, the set $S_\sigma^{\mc J}$ is contained in the set $\{j\in \mc J:\dim_{ V_j^\Theta}(\sigma)\neq 0\}$  since $\dim_{ V_j^\Theta}(\sigma)=\dim J_{V_j^\Theta}(\sigma)$, where  
 $J_{V_j^\Theta}(\sigma)=\overline{\sp}\{\mc T^{\Lambda}(L_{\gamma}\varphi^j)(\sigma):\varphi\in\msc A, \gamma\in \Gamma, \widehat \varphi^j=\widehat \varphi \chi_{\mc H^{\Theta}_j}\}$. Then, we have  
 $$
 |S_\sigma^{\mc J}| \leq |\{j\in \mc J:\dim_{ V_j^\Theta}(\sigma)\neq 0\}|  \leq \sum_{j\in \mc J} \dim_{V_j^\Theta}(\sigma)=\dim_W(\sigma), \ a.e. \ \sigma \in \Sigma.
 $$
Since the set   $\{\Sigma+j+\theta^*\}_{j\in \mc J, \theta^*\in \Theta^\perp}$
is a tiling set  for $\widehat  Z$, therefore for a fixed $\sigma\in\Sigma$ and $j\in \mc J$   there is a  unique
 $\theta_{\sigma,j}^*\in \Theta^\perp$ such that 
$\sigma+j+\theta_{\sigma,j}^*\in \mc D$, and hence we have 
{\small\begin{align*}
	\mu\left(\left\{ \delta \in \mc D:\widehat \varphi (\delta)\neq 0\right\}\right) 
	& =\sum_{j\in \mc J} \mu\left(\left\{ \sigma\in\Sigma:\widehat{\varphi}(\sigma+j+\theta_{\sigma,j}^*)\neq 0\right\}\right)\\
&=	\mu\left(\left\{(\sigma, j) \in \Sigma \times \mc J:\widehat{\varphi}(\sigma+j+\theta^*_{\sigma,j})\neq 0\right\}\right)\\
&=\int_{\Sigma} \left|S_\sigma^{\mc J} \right| d\sigma\\
&\leq \int_{\Sigma}  \sum_{j\in \mc J} \dim_{V_j^\Theta}(\sigma) d\sigma= \int_{\Sigma} \dim_W(\sigma) d\sigma\\ 
&=\sum_{m=0}^{nk} m \, \mu(\Sigma_{m})   \leq nk, 
\end{align*}}
where  $|\Gamma|=k$ and   $\Sigma_{m}=\{\sigma\in\Sigma: \dim _W(\sigma)=m \}$.  Thus the result follows.
\end{proof}
We have a immediate consequence for the singly generated system.
\begin{cor}\label{support} Let $\Sigma$ and $\mc J$ be the Borel sections of $\widehat Z/\Lambda^\perp$ and  $\Lambda^\perp/\Theta^\perp$, respectively,  such that the cardinality $|\mc J|$ of $\mc J$ and measure $\mu(\Sigma)$ of $\Sigma$ satisfies the relation
	 $$
	   (|\mc J|\mu(\Sigma) -k)>0, \ \mbox{where} \ k \ \mbox{is the cardinality of a nonempty set}\  \Gamma.
	 $$
When  the space $\mc S^{\Gamma\Lambda}(\varphi)$  becomes  $\Gamma\Theta$-invariant,   then the Plancherel transform $\widehat \varphi$  of $\varphi$ satisfies the following relation:  
	$$\mu(\{\omega\in \mathfrak z^* :\widehat \varphi(\omega)=0\})\geq  |\Theta^\perp|(|\mc J|\mu(\Sigma) -k)>0.$$
\end{cor}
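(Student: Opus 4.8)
The plan is to deduce the corollary directly from Theorem \ref{measure}, specialized to the singly generated case $\msc A=\{\varphi\}$ (so $n=1$ and $|\Gamma|=k$), by tiling $\mathfrak z^*\cong\widehat Z$ with translates of a Borel section of $\widehat Z/\Theta^\perp$ and summing the resulting estimate over the pieces.

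First I would fix the section $\mc D:=\Sigma+\mc J=\bigcup_{j\in\mc J}(\Sigma+j)$ of $\widehat Z/\Theta^\perp$. This is indeed a Borel section because $\Sigma$ tiles $\widehat Z$ with tiling partner $\Lambda^\perp$ and $\mc J$ tiles $\Lambda^\perp$ with tiling partner $\Theta^\perp$; in particular $\mu(\mc D)=|\mc J|\,\mu(\Sigma)$. For every $\theta^*\in\Theta^\perp$ the translate $\mc D+\theta^*$ is again a Borel section of $\widehat Z/\Theta^\perp$ (translating a fundamental domain by a group element gives another one), with $\mu(\mc D+\theta^*)=\mu(\mc D)=|\mc J|\,\mu(\Sigma)$ by translation invariance of Haar measure, and the family $\{\mc D+\theta^*\}_{\theta^*\in\Theta^\perp}$ tiles $\widehat Z\cong\mathfrak z^*$.

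The key observation is that the bound supplied by Theorem \ref{measure} is uniform in the chosen section: with $n=1$ it gives, for every Borel section $\mc D'$ of $\widehat Z/\Theta^\perp$,
$$\mu\bigl(\{\delta\in\mc D':\widehat\varphi(\delta)\neq0\}\bigr)\le\sum_{m=0}^{k}m\,\mu(\Sigma_m)\le k,$$
and the right-hand side depends only on $W=\mc S^{\Gamma\Lambda}(\varphi)$ and $\Sigma$, not on $\mc D'$. Applying this with $\mc D'=\mc D+\theta^*$ and passing to the complement inside $\mc D+\theta^*$ yields
$$\mu\bigl(\{\delta\in\mc D+\theta^*:\widehat\varphi(\delta)=0\}\bigr)\ge\mu(\mc D+\theta^*)-k=|\mc J|\,\mu(\Sigma)-k,$$
which is strictly positive by the standing hypothesis $|\mc J|\,\mu(\Sigma)-k>0$.

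Finally I would sum this inequality over $\theta^*\in\Theta^\perp$, using that the sets $\mc D+\theta^*$ are pairwise disjoint up to null sets and cover $\mathfrak z^*$:
$$\mu\bigl(\{\omega\in\mathfrak z^*:\widehat\varphi(\omega)=0\}\bigr)=\sum_{\theta^*\in\Theta^\perp}\mu\bigl(\{\delta\in\mc D+\theta^*:\widehat\varphi(\delta)=0\}\bigr)\ge|\Theta^\perp|\bigl(|\mc J|\,\mu(\Sigma)-k\bigr)>0,$$
which is exactly the asserted estimate (and it is positive because each summand is). I do not expect any genuine obstacle; the only point requiring a little care is the uniformity of Theorem \ref{measure} in the section $\mc D$, but this is transparent from the proof of that theorem, where the controlling quantity turns out to be $\int_\Sigma\dim_W(\sigma)\,d\sigma$, which is independent of $\mc D$. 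The remainder is routine measure-theoretic bookkeeping.
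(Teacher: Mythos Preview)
Your proposal is correct and follows essentially the same route as the paper: tile $\mathfrak z^*$ by the translates $\mc D+\theta^*$ of a section $\mc D$ of $\widehat Z/\Theta^\perp$, apply Theorem \ref{measure} (with $n=1$) on each piece, and sum. If anything, you are more careful than the paper in explicitly noting that the bound from Theorem \ref{measure} is uniform in the choice of section, which is exactly what makes the summation work.
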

\begin{proof} Considering a Borel section   $\mc D$  of $\widehat Z/\Theta^\perp$ and noting $\widehat Z \cong \mathfrak z^*$, we have the following from Theorem \ref{measure}:
	\begin{align*}
	\mu(\{\omega\in \mathfrak z^* :\widehat \varphi(\omega)=0\}) &=\sum_{\theta^*\in \Theta^\perp} \mu(\{\delta \in \mc D+\theta^*:\widehat \varphi(\delta)=0\})=\sum_{\theta^*\in \Theta^\perp} \mu[(\mc D+\theta^*) \backslash (\{\delta \in \mc D+\theta^*:\widehat \varphi(\delta)\neq 0\})]\\
	&=\sum_{\theta^*\in \Theta^\perp}  \mu(\mc D) - \sum_{\theta^*\in \Theta^\perp}  \mu(\{\delta\in \mc D:\widehat \varphi(\delta)\neq 0\})\\
	&=\sum_{\theta^*\in \Theta^\perp}  \sum_{j\in \mc J}\mu(\Sigma+j) - \sum_{\theta^*\in \Theta^\perp}\mu(\{y\in \mc D:\widehat \varphi(y)\neq 0\})\\
		& \geq|\Theta^\perp||\mc J | \mu(\Sigma)-k |\Theta^\perp|=|\Theta^\perp|[ |\mc J|\mu(\Sigma) -k]>0.
	\end{align*}
Thus the result follows.
\end{proof}
 
\begin{rk}
 For the Heisenberg group $\mathbb  H^d$, the center $Z=\mathbb R$,  the uniform lattice $\Lambda=\mathbb Z$ and  the extra-invariance set $\Theta=\frac{1}{N} \mathbb Z$. Then the annihilators of $\Lambda$ and $\Theta$ are $\Lambda^\perp=\mathbb Z$ and  $\Theta^\perp=N\mathbb Z$, respectively.  Consider $\Sigma=[0, 1)$ and $\mc J=\{0, 1, 2, \cdots, N-1\}$ be the Borel sections of $\widehat Z/ \Lambda^\perp=\mathbb R/\mathbb Z$  and  $ \Lambda^\perp/ \Theta^\perp= \mathbb Z/ N\mathbb Z$, and choose   $\Gamma=\{0\}$. Then  $\mc D=[0, N)$  is the Borel section of $\widehat Z/\Theta^\perp$ and the estimate (\ref{estimate}) mentioned in Theorem \ref{measure} can be written as $ 	\mu(\{\xi\in [0, N): \widehat \varphi_i(\xi)\neq 0\})  \leq n$,  for all  $i \in \{1, 2, \cdots, n\}, $ since the cardinality of $\Gamma$ is $k=1$. For $N>1$, when the space $\mc S^{\Gamma\Lambda}(\varphi)$ becomes $\Gamma\Theta$-invariant,   the measure of  the set $\{\xi\in \mathbb R :\widehat \varphi(\xi)=0\}$ is infinite from Corollary \ref{support}.
\end{rk}

\providecommand{\bysame}{\leavevmode\hbox to3em{\hrulefill}\thinspace}
\providecommand{\MR}{\relax\ifhmode\unskip\space\fi MR }
\providecommand{\MRhref}[2]{%
	\href{http://www.ams.org/mathscinet-getitem?mr=#1}{#2}
}
\providecommand{\href}[2]{#2}


\end{document}